\documentclass{article}

\usepackage{graphicx}
\usepackage{amsmath}
\usepackage{amsthm}
\usepackage{amssymb}
\usepackage{amsfonts}
\usepackage{latexsym}
\usepackage[labelsep=period]{caption}
\usepackage{caption}
\usepackage{hyperref}
\usepackage{url}
\usepackage{subcaption}
\usepackage{comment}
\usepackage{wasysym}
\usepackage{fontawesome5}
\usepackage{tikzsymbols}
\usepackage{mathtools}
\usepackage{color}
\usepackage{mathrsfs}

\def\dj{d\kern-0.4em\char"16\kern-0.1em}
\def\Dj{\mbox{\raise0.3ex\hbox{-}\kern-0.4em D}}

\newtheorem{theorem}{Theorem}[section]
\newtheorem{lemma}[theorem]{Lemma}
\newtheorem{remark}[theorem]{Remark}

\theoremstyle{definition}
\newtheorem{definition}[theorem]{Definition}

\let\tmp\oddsidemargin
\let\oddsidemargin\evensidemargin
\let\evensidemargin\tmp
\reversemarginpar

\usepackage{graphicx} 

\title{An interpolation problem for extended Gevrey regularity}
\author{
Jelena Dimitri\'c\thanks{University of Novi Sad, Serbia, email: jelena222000@gmail.com}
\and
\Dj or\dj e Vu\v ckovi\'c\thanks{Technical Faculty "Mihajlo Pupin", University of Novi Sad, Serbia, email: djordjeplusja@gmail.com}
\and
Milica \v Zigi\'c\thanks{Faculty of Sciences, University of Novi Sad, Serbia, email: milica.zigic@dmi.uns.ac.rs}
}

\date{\today}

\begin{document}

\maketitle

\begin{abstract}
We study an interpolation problem in extended Gevrey classes defined by the sequences
$M_n^{\tau,\sigma}=n^{\tau n^\sigma}$, $n\in\mathbb N$, $\tau>0$, $\sigma>1$.
We show that, under a suitable growth condition on a divergent sequence of derivative orders, estimates of extended Gevrey type imposed only along this sequence imply corresponding estimates for all derivative orders. In particular, we explicitly quantify the change of the parameters in the resulting estimates. To this end, we first establish an equivalence between two definitions of the extended Gevrey classes, one involving the supergeometric factor $h^{n^\sigma}$ and another in which this factor is omitted. We then establish a corresponding interpolation principle for extended Gelfand--Shilov spaces, including a symmetric characterization and a formulation in terms of the associated function of the defining sequence.
\end{abstract}

\noindent \textbf{Keywords:} extended Gevrey classes; extended Gelfand–Shilov spaces; interpolation problem; ultradifferentiable functions 

\noindent \textbf{MSC (2020):}  26E10, 46E10, 46F05

\section{Introduction}

The present paper is devoted to an interpolation problem for extended Gevrey classes. The underlying question is whether ultradifferentiable regularity can be inferred from information on the derivatives only at a suitably chosen increasing sequence of orders. Such lacunary information naturally arises in regularity questions for solutions of various classes of partial differential equations and related problems, where one seeks to determine which derivatives need to be controlled in order to conclude that a function belongs to a prescribed ultradifferentiable class. This type of question also appears in various contexts involving regularity and analyticity; see \cite{PAMM2022,RS2024} and the references therein.

The Gevrey classes occupy a central position in the theory of partial differential equations, providing a natural scale of regularity between real-analytic and smooth functions. They arise naturally in the study of hypoellipticity, local solvability, propagation of singularities, and well-posedness for various classes of partial differential equations; see \cite{Gevrey,Rodino}. In many situations, however, Gevrey regularity proves to be too restrictive, whereas $C^\infty$-regularity is too weak, motivating the study of intermediate classes of ultradifferentiable functions. For instance, J\'ez\'equel \cite{J} proved that the trace formula for Anosov flows in dynamical systems holds for certain intermediate regularity classes, while Cicognani and Lorenz considered another intermediate regularity scale in their study of the well-posedness of strictly hyperbolic equations \cite{CL}. More recently, C\'aceres and Lastra \cite{CL2026} studied moment differential equations associated with moments of intermediate growth and proved existence and uniqueness of their solutions.

A particularly relevant example of such intermediate regularity classes is given by the extended Gevrey classes. A systematic approach to these classes was introduced in \cite{PTT1,PTT} and further developed and systematized in \cite{TTZ2024} through the two-parameter family of sequences
$$
M_n^{\tau,\sigma}
=n^{\tau n^\sigma},
\qquad n\in\mathbb N,\qquad
M_0^{\tau,\sigma}=1,\qquad
\tau>0,\quad \sigma>1,
$$
which define the extended Gevrey classes. These classes contain all Gevrey classes while remaining strictly smaller than $C^\infty$, thus providing a finer scale of intermediate regularity. They have recently attracted considerable attention both in the general theory of ultradifferentiable functions and in applications to partial differential equations and microlocal analysis; see \cite{CL2026,CL,J,Javier02,Javier01,TTT-24}. In particular, it has recently been shown that the extended Gevrey classes naturally fit into the framework of weight matrix classes \cite{Javier01,TT-01}, despite the fact that their defining sequences do not satisfy Komatsu's moderate growth condition
$$
\displaystyle (M.2)\quad 
(\exists\, C>0)\qquad
M_{n+m}\leq C^{n+m+1}M_nM_m,
\qquad n,m\in\mathbb N.
$$

We now turn to the interpolation problem. An interpolation result asserting that control of derivatives along a prescribed sequence of orders implies control at all orders was established in \cite{PAMM2022} for non-quasianalytic Denjoy--Carleman classes of Roumieu type, under suitable assumptions on the defining sequence and on the sequence of derivative orders. The condition imposed in \cite{PAMM2022} is not satisfied by the extended Gevrey sequences considered here. A weaker condition was recently used in \cite{RS2024}, but it is still not satisfied in our setting. On the other hand, the general interpolation framework developed in Chapter~5 of \cite{RS2024} includes the extended Gevrey sequences and therefore yields the existence of the interpolation result considered here at a qualitative level, see Remark \ref{rem 3.6}.

Our main contribution is to quantify this interpolation phenomenon in the extended Gevrey setting. Namely, let $(d_n)_{n\in\mathbb N_0}$ be a strictly increasing, divergent sequence of positive integers satisfying
$$
d_{n+1}\leq Ld_n,\qquad n\in\mathbb N_0,
$$
for some constant $L>1$. We prove that if the derivatives of a smooth function are controlled only at the orders $d_n$, $n\in\mathbb N_0$, by extended Gevrey-type estimates, then the derivatives of all orders satisfy the corresponding extended Gevrey bounds. More precisely, we explicitly determine the parameters of the estimates for derivatives of all orders in terms of those controlling the derivatives at the orders $d_n$. Thus, we quantify the change of the interpolation parameters rather than merely asserting their existence.

We first establish an equivalence between two definitions of the extended Gevrey classes: one involving the supergeometric factor $h^{n^\sigma}$ and another in which this factor is omitted. Theorem~\ref{thm:with-or-without-h} provides this equivalence and is essential for the subsequent arguments. It also allows us to relate the extended Gevrey setting to the classical Carleman and Braun--Meise--Taylor frameworks, whose defining estimates involve the geometric scaling factor $h^n$. We then establish the quantitative interpolation theorem for the extended Gevrey classes and derive the corresponding results for the extended Gelfand--Shilov spaces. In the latter setting, we obtain a symmetric characterization in terms of the two families of estimates defining these spaces, as well as a characterization in terms of the associated function of the defining sequence.

We emphasize that the extended Gevrey classes are defined via the local behavior of their elements, i.e., for functions defined on an interval, whereas the extended Gelfand--Shilov spaces are introduced by imposing suitable global estimates. However, the essential estimates used in the proof of the interpolation principle for the extended Gevrey case depend solely on the length of the considered interval and are completely independent of its location. This feature allows the local result to be transferred to the global setting of the extended Gelfand--Shilov spaces.

The paper is organized as follows. In Section~\ref{sec:2}, we collect the necessary background and auxiliary estimates concerning the extended Gevrey classes. We also state and prove Theorem~\ref{thm:with-or-without-h}, which is used throughout the paper. In Section~\ref{sec:3}, we prove the main result of the paper, Theorem~\ref{thm: main}, concerning the interpolation problem in the extended Gevrey spaces. Section~\ref{sec:4} is devoted to the corresponding results for the extended Gelfand--Shilov spaces. We first establish Theorem~\ref{12ekv} and then derive the corresponding statement for the associated function, given in Lemma~\ref{15ekv}. Finally, an appendix contains the proofs of two technical lemmas.

\section{Extended Gevrey spaces}
\label{sec:2}

In this section, we recall the definition of the extended Gevrey classes and discuss their topological properties, which will be used in the sequel. We also show that the defining sequences $(M_n^{\tau,\sigma})_{n\in\mathbb N_0}$ given by \eqref{eq:the-sequence} do not satisfy the condition \eqref{condition X} considered in \cite{PAMM2022}, nor the weaker condition \eqref{condition bounded} assumed in \cite{RS2024}.

\subsection{Defining weight sequences}

Let $\tau>0$ and $\sigma>1$. We consider the weight sequence
$(M_n^{\tau,\sigma})_{n\in\mathbb N_0}$ defined by
\begin{equation} \label{eq:the-sequence}
    M_{n}^{\tau,\sigma}=n^{\tau n^\sigma}, \quad n\in \mathbb{N},
    \qquad M_0^{\tau,\sigma}=1.
\end{equation}

The main properties of $(M_{n}^{\tau,\sigma})_{n\in\mathbb N_0}$ are summarized in Lemma~\ref{osobineM_p_s} (cf. \cite[Lemma~2]{TTZ2024}).

\begin{lemma} \label{osobineM_p_s}
Let $\tau>0$, $\sigma>1$, $M_0^{\tau,\sigma}=1$, and $M_n^{\tau,\sigma}=n^{\tau n^{\sigma}}$ for $n\in \mathbb N$. Then the following properties hold:
\begin{itemize}
\item[$(M.1)$] \hspace{1em} 
$ (M_n^{\tau,\sigma})^2\leq M_{n-1}^{\tau,\sigma}M_{n+1}^{\tau,\sigma}, \quad n\in \mathbb N,$ \medskip

\item[$\widetilde{(M.2)}$]  \hspace{1em}   
$ M_{n+m}^{\tau,\sigma}\leq C^{n^{\sigma} + m^{\sigma}}
M_n^{\tau 2^{\sigma-1},\sigma}M_m^{\tau 2^{\sigma-1},\sigma}, \quad n,m\in \mathbb N_0,$
for some constant $ C\geq 1$, \medskip

\item[$\widetilde{(M.2)'}$] \hspace{1em}
$ M_{n+1}^{\tau,\sigma}\leq C ^{n^{\sigma}} M_n^{\tau,\sigma}, \quad n\in \mathbb N_0,$
for some constant $C\geq 1$, \medskip

\item[$(M.3)'$] \hspace{1em}
$ \displaystyle \sum_{n=1}^{\infty}\frac{M_{n-1}^{\tau,\sigma}}{M_n^{\tau,\sigma}} <\infty.$
\end{itemize}
\end{lemma}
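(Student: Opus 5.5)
Each of the four properties reduces, after taking logarithms, to an elementary inequality for the function $f(x)=\tau x^\sigma\log x$ (with $f(0)=0$), since $\log M_n^{\tau,\sigma}=f(n)$.

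\emph{Log-convexity $(M.1)$.} This is exactly midpoint convexity of $n\mapsto f(n)$ on $\mathbb N_0$. For $x>0$ we have
\[
f''(x)=\tau x^{\sigma-2}\bigl(\sigma(\sigma-1)\log x+2\sigma-1\bigr),
\]
which is positive for $x\ge 1$. So $f$ is convex on $[1,\infty)$, giving $(M.1)$ for $n\ge 2$. The case $n=1$ reads $1\le M_0M_2=2^{\tau 2^\sigma}$, which is immediate.

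\emph{Property $\widetilde{(M.2)'}$.} We need $f(n+1)-f(n)\le n^\sigma\log C$. For $n=0$ the left side is $0$, so this holds with any $C\ge1$. For $n\ge1$, the mean value theorem gives
\[
f(n+1)-f(n)\le f'(n+1)=\tau (n+1)^{\sigma-1}\bigl(\sigma\log(n+1)+1\bigr).
\]
This is $O(n^{\sigma-1}\log n)=o(n^\sigma)$, because $\sigma>1$ and $\log n=o(n)$. Hence the ratio $(f(n+1)-f(n))/n^\sigma$ is bounded on $\mathbb N$, and any $C\ge1$ with $\log C$ at least that bound works.

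\emph{Property $\widetilde{(M.2)}$.} By symmetry assume $n\ge m$; the case $m=0$ is trivial since $2^{\sigma-1}\ge1$. Convexity of $x^\sigma$ gives $(n+m)^\sigma\le 2^{\sigma-1}(n^\sigma+m^\sigma)$, and $\log(n+m)\le\log 2+\log n$. Therefore
\[
f(n+m)\le \tau 2^{\sigma-1}(n^\sigma+m^\sigma)\log n+\tau 2^{\sigma-1}(n^\sigma+m^\sigma)\log 2 .
\]
The last term is absorbed into $(n^\sigma+m^\sigma)\log C$ by taking $\log C\ge \tau 2^{\sigma-1}\log 2$. What remains is the cross term $\tau2^{\sigma-1}m^\sigma\log n$, which has to be compared with the available $\tau2^{\sigma-1}(n^\sigma\log n+m^\sigma\log m)$. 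This is the main obstacle, since $m^\sigma\log n$ is not directly dominated by $m^\sigma\log m$. Write
\[
m^\sigma\log n=m^\sigma\log m+m^\sigma\log(n/m).
\]
Using $\log t\le t-1$ with $t=n/m$, we get $m^\sigma\log(n/m)\le m^{\sigma-1}(n-m)\le m^{\sigma-1}n\le n^\sigma$. This extra $n^\sigma$ is again geometric of order $n^\sigma$ and is absorbed into $C^{n^\sigma}$ by enlarging $C$ by a factor $e^{\tau2^{\sigma-1}}$. The $n\log n$ part is covered by $n^\sigma\log n$, so the estimate closes. I would double-check the constants here carefully, but only the form $C^{n^\sigma+m^\sigma}$ matters.

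\emph{Property $(M.3)'$.} By the mean value theorem and monotonicity of $f'$ on $[1,\infty)$,
\[
\frac{M_{n-1}}{M_n}=e^{-(f(n)-f(n-1))}\le e^{-f'(n-1)}\quad (n\ge2).
\]
Since $f'(x)\ge \tau\sigma x^{\sigma-1}\log x$ and $\sigma>1$, the right side eventually decays faster than $n^{-2}$, so the series converges.
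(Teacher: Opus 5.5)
Your proof is correct. Note that the paper itself gives no argument for this lemma; it is simply quoted from \cite[Lemma~2]{TTZ2024}. Your write-up therefore serves as a self-contained verification rather than an alternative route to anything in the text.

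The reduction to $f(x)=\tau x^\sigma\log x$ works for all four properties:
\begin{itemize}
\item[$(M.1)$] Convexity of $f$ on $[1,\infty)$ handles $n\ge 2$. You rightly check $n=1$ separately, since $f''<0$ near $0$.
\item[$\widetilde{(M.2)'}$] The mean value bound $f(n+1)-f(n)\le f'(n+1)$, combined with $(n+1)^{\sigma-1}\le 2^{\sigma-1}n^{\sigma-1}$ and $\log(n+1)\le n$, gives an explicit admissible constant $C=e^{\tau 2^{\sigma-1}(\sigma+1)}$.
\item[$\widetilde{(M.2)}$] The constants do close. For $1\le m\le n$, collecting your terms gives $\log M_{n+m}^{\tau,\sigma}\le 2^{\sigma-1}\bigl(f(n)+f(m)\bigr)+\tau 2^{\sigma-1}(1+\log 2)(n^\sigma+m^\sigma)$, so $C=(2e)^{\tau 2^{\sigma-1}}$ works.
\item[$(M.3)'$] Your mean value bound is fine as written.
\end{itemize}

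One sentence should be fixed: ``the $n\log n$ part is covered by $n^\sigma\log n$'' is garbled. The term in question is $\tau 2^{\sigma-1}n^\sigma\log n$, which is exactly $\log M_n^{\tau 2^{\sigma-1},\sigma}$. It is matched directly, with nothing to absorb.
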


Compared with the classical Komatsu setting, the sequences $(M_n^{\tau,\sigma})_{n\in\mathbb N_0}$ 
may
exhibit a stronger, superexponential growth behaviour governed by the parameter $\sigma>1$. In contrast to the classical Komatsu setting, they do not satisfy Komatsu’s condition
\[
(M.2)\qquad (\exists C>0)\; M_{n+m} \leq C^{n+m+1} M_n M_m,
\quad n,m\in\mathbb N.
\]
Thus, the sequences $(M_n^{\tau,\sigma})_{n\in\mathbb N_0}$ fall outside the standard Komatsu setting.

\begin{remark}
    Note that, for $\sigma=1$, one obtains $\displaystyle M^{\tau,1}_p = p^{\tau p}$, $p\in\mathbb N$, where $\tau>0$, which corresponds to the Gevrey sequence of order $\tau>0$. Then the conditions $\widetilde{(M.2)'}$ and $\widetilde{(M.2)}$ reduce to the classical Komatsu conditions  ${(M.2)'}$ and ${(M.2)}$, respectively. Recall,
$$(M.2)'\quad (\exists\, C>0)\ M_{p+1} \leq C^{p+1} M_{p},\quad p\in \mathbb N.$$
Moreover, the Gevrey sequences satisfy \emph{non-quasianalyticity} condition ${(M.3)'}$ if and only if $\tau > 1$. 
\end{remark}

We now compare the defining sequence $(M_n^{\tau,\sigma})_{n\in\mathbb N_0}$ with those considered in \cite{PAMM2022}. There, the authors assume that the defining sequence satisfies three conditions that are crucial for their main results. Of these, the sequence $(M_n^{\tau,\sigma})_{n\in\mathbb N_0}$ satisfies condition $(M.1)$ and the lower bound
\[
c^n n^n \leq M_n,\quad c>0,
\]
ensuring the inclusion of real-analytic functions in the corresponding function space.

The remaining condition is
\begin{equation}\label{condition X}
(\exists m_0\geq 0)\quad
M_j \leq M_k^{j/k} M_i^{j/i},
\quad \text{for } i,k>m_0 \text{ with } i<j \text{ and } \frac{j}{i}<k.
\end{equation}

We show that the sequence $(M_n^{\tau,\sigma})_{n\in\mathbb N_0}$ does not satisfy this condition. Let $\sigma>1$ and, without loss of generality, set $\tau=1$. Then $M_n^{1,\sigma}=n^{n^{\sigma}}$, $n\in\mathbb N$. Assume, by contradiction, that $(M_n^{1,\sigma})_{n\in\mathbb N_0}$ satisfies \eqref{condition X}. Taking the natural logarithm of both sides of \eqref{condition X} and dividing by $j$, we obtain the equivalent inequality
\begin{equation}\label{condition log}
(\exists m_0\geq 0)\quad
j^{\sigma-1}\ln j \leq k^{\sigma-1}\ln k+i^{\sigma-1}\ln i,\  i,k>m_0,\  i<j,\  \frac{j}{i}<k.
\end{equation}
Since  
\begin{equation*}
\lim_{m\to\infty}\frac{m^{2\sigma-2} \ln m^2}{2(m+1)^{\sigma-1} \ln (m+1)}=\infty,    
\end{equation*}
there exists $m_0$ (depending on $\sigma$) such that for every $m\geq m_0$ we have
\begin{equation}
    \label{lim}
    \frac{m^{2\sigma-2} \ln m^2}{2(m+1)^{\sigma-1} \ln (m+1)}>1.
\end{equation}

Now set $i=m$, $k=m+1$, and $j=m^2$ for every 
$m\geq \max\{2,m_0\}$. 
Since the function 
$x\mapsto x^{\sigma-1}\ln x$ is increasing on $[1,\infty)$ and using \eqref{lim}, the right-hand side of \eqref{condition log} satisfies,
\[
\begin{aligned}
k^{\sigma-1}\ln k+i^{\sigma-1}\ln i
&=(m+1)^{\sigma-1}\ln(m+1)+m^{\sigma-1}\ln m\\
&<2(m+1)^{\sigma-1}\ln(m+1)\\
&<m^{2\sigma-2}\ln m^2
=j^{\sigma-1}\ln j.
\end{aligned}
\]
Hence, for this choice of $i$, $j$, and $k$, the right-hand side of \eqref{condition log} is strictly smaller than its left-hand side, but this contradicts \eqref{condition log}.
Therefore, \eqref{condition X} does not hold for $(M_n^{\tau,\sigma})_{n\in\mathbb N_0}$.

We further note that, although the sufficient condition \eqref{condition log} is relaxed in \cite[Theorem 4.1]{RS2024}, our defining sequence $(M_n^{\tau,\sigma})_{n\in\mathbb N_0}$ still does not satisfy this weaker condition. More precisely, the condition imposed in \cite[Theorem 4.1]{RS2024} requires that, for a strictly increasing infinite sequence $(d_n)_{n\in\mathbb N_0}$ of positive integers,
\begin{equation}\label{condition bounded}
\sup_{n\in\mathbb N_0}
\frac{m_{d_{n+1}}}{m_{d_n}}<\infty,
\qquad
m_n=(M_n^{\tau,\sigma})^{1/n}.
\end{equation}
We show that this condition fails when $(d_n)_{n\in\mathbb N_0}$ is a geometric sequence. More precisely, let $\tau>0$, $\sigma>1$, and let $d_{n+1}=Ld_n$ for every $n\in\mathbb N_0$, where $L\in\mathbb N\setminus\{1\}$. Then the quotient $m_{d_{n+1}}/m_{d_n}$ is unbounded. Indeed, since
$$
m_n=n^{\tau n^{\sigma-1}},\qquad n\in\mathbb N,
$$
we obtain
$$
\frac{m_{Ld_n}}{m_{d_n}}
=\frac{(Ld_n)^{\tau(Ld_n)^{\sigma-1}}}
{d_n^{\tau d_n^{\sigma-1}}}
=L^{\tau(Ld_n)^{\sigma-1}}
d_n^{\tau(L^{\sigma-1}-1)d_n^{\sigma-1}}.
$$
Since $L>1$, $\tau>0$, and $\sigma>1$, both factors on the right-hand side tend to infinity as $n\to\infty$. Consequently, the boundedness condition \eqref{condition bounded} is not satisfied.

\subsection{The spaces $\mathcal E_{\sigma} ([a,b])$}

In this section, we define the extended Gevrey spaces in two equivalent ways, which differ only in whether a scaling supergeometric factor is incorporated into their defining estimates.

\begin{definition}\label{Def:extGS}
Let $\sigma>1$. 
The extended Gevrey space of Roumieu type related to $(M_{n}^{\tau ,\sigma})_{n\in \mathbb{N}_0}$ is given by
\begin{multline*}
\mathcal E_{\{\sigma\}} ([a,b])=\Big\{f\in  C^{\infty}([a,b]) \colon  (\exists \tau>0)\;(\exists C>0)\\
   \max_{x\in[a,b]}|f^{(n)}(x)|\leq C M^{\tau,\sigma}_{n}, \;n \in\mathbb N_0\Big\}
\end{multline*}
and the extended Gevrey space of Beurling type related to $(M_{n}^{\tau ,\sigma})_{n\in \mathbb{N}_0}$ is given by
\begin{multline*}
 \mathcal E_{(\sigma)} ([a,b])=\Big\{f\in C^{\infty}([a,b]) \colon
 (\forall \tau>0) (\exists C>0)\\ \max_{x\in[a,b]}|f^{(n)}(x)|\leq C M^{\tau,\sigma}_{n}, \;n \in\mathbb N_0\Big\},
\end{multline*}
where $(M_{n}^{\tau ,\sigma})_{n\in \mathbb{N}_0}$ is given by  \eqref{eq:the-sequence}.
\end{definition}

We use $\mathcal E_{\sigma}([a,b])$ as a common notation for
$ \mathcal E_{\{\sigma\}}([a,b])$
or $ \mathcal E_{(\sigma)}([a,b])$.

A detailed overview of the properties of extended Gevrey classes and the corresponding defining sequences can be found in \cite{TTZ2024}.

We now state the following lemma (see \cite[Lemma~2]{TTZ2025}), which will be frequently used in the sequel.

\begin{lemma} \label{lm1}
\begin{equation*} \label{eq:lema-from-ptt}
 \sup_{\rho >0}  \frac{h^{\rho^\sigma}}{\rho^{\tau \rho ^\sigma}} 
 = e^{\frac{\tau}{\sigma e} h^{\frac{\sigma}{\tau}}}
  \end{equation*}
for any $h>0$, $\tau > 0$ and $\sigma >1$.
  \end{lemma}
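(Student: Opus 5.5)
We compute the supremum by reducing it to a one-variable calculus problem for the logarithm of the quotient. For fixed $h>0$, $\tau>0$, $\sigma>1$, set
$$
g(\rho)=\ln\frac{h^{\rho^\sigma}}{\rho^{\tau\rho^\sigma}}=\rho^\sigma\bigl(\ln h-\tau\ln\rho\bigr),\qquad \rho>0.
$$
Since $\exp$ is increasing, it suffices to show $\sup_{\rho>0}g(\rho)=\frac{\tau}{\sigma e}h^{\sigma/\tau}$.

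The natural first step is the substitution $t=\rho^\sigma>0$. This is a bijection of $(0,\infty)$ onto itself, and $\ln\rho=\frac1\sigma\ln t$. Hence
$$
g=\varphi(t):=t\ln h-\frac{\tau}{\sigma}\,t\ln t .
$$
Differentiating gives $\varphi'(t)=\ln h-\frac{\tau}{\sigma}(\ln t+1)$. This is strictly decreasing in $t$, since $\varphi''(t)=-\frac{\tau}{\sigma t}<0$. It therefore vanishes at exactly one point $t_0$, determined by
$$
\ln t_0=\frac{\sigma}{\tau}\ln h-1,\qquad\text{i.e.}\qquad t_0=e^{-1}h^{\sigma/\tau}.
$$
Strict concavity of $\varphi$ on $(0,\infty)$ shows that $t_0$ is the global maximum. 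One can also check the boundary behaviour directly: $\varphi(t)\to0$ as $t\to0^+$ and $\varphi(t)\to-\infty$ as $t\to\infty$, so the supremum is attained at $t_0$ and is a maximum.

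To evaluate the maximum, use the critical point relation $\ln h=\frac{\tau}{\sigma}(\ln t_0+1)$:
$$
\varphi(t_0)=t_0\Bigl(\ln h-\frac{\tau}{\sigma}\ln t_0\Bigr)=\frac{\tau}{\sigma}\,t_0=\frac{\tau}{\sigma e}\,h^{\sigma/\tau}.
$$
Exponentiating yields the claimed identity, with the supremum attained at $\rho_0=t_0^{1/\sigma}=e^{-1/\sigma}h^{1/\tau}$.

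No step here is a real obstacle. The only point needing care is that the supremum is over the continuous range $\rho>0$, not over integers, so the concavity argument gives the exact value. Note also that the case $h\le1$ needs no separate treatment, because $t_0>0$ for every $h>0$.
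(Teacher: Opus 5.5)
Your proof is correct. The logarithm, the substitution $t=\rho^\sigma$, the strict concavity of $\varphi(t)=t\ln h-\frac{\tau}{\sigma}t\ln t$, and the evaluation $\varphi(t_0)=\frac{\tau}{\sigma}t_0=\frac{\tau}{\sigma e}h^{\sigma/\tau}$ all check out. You also show that the supremum is attained, at $\rho_0=e^{-1/\sigma}h^{1/\tau}$.

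The paper does not prove this lemma itself; it imports it from \cite[Lemma~2]{TTZ2025}, so there is no in-paper argument to compare with. Your self-contained derivation is the standard one. It follows the same pattern the paper uses in its appendix for Lemma~\ref{max1}: take logarithms, locate the unique critical point, and conclude by concavity. The substitution $t=\rho^\sigma$ is a nice touch, since it makes concavity immediate.

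One small observation: your argument never uses $\sigma>1$, so it actually gives the identity for every $\sigma>0$. The hypothesis $\sigma>1$ only reflects the setting of the paper.
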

  
In particular, Lemma \ref{lm1} implies the following useful inequality: for any given $h > 0$,
\begin{equation}\label{jede h}
    h ^{n^\sigma}  M^{\frac{\tau}{2}, \sigma} _{n} \leq
 A   M^{\tau, \sigma} _{n}, \qquad  n \in \mathbb{N}_0 ,
 \end{equation}
where the constant $A>0$ depends on $\sigma$, $\tau$  and $h$.

In the literature, it is common to define Gevrey spaces using the geometric factor $h^n$. In what follows, we introduce the corresponding definition of extended Gevrey spaces in this setting and show that the two definitions are equivalent. To this end, we introduce the notation:
\begin{multline} \label{eq:GS-with-factor}
   \mathscr E_{\{\sigma \} }([a,b])  =
    \Big\{ f \in C^{\infty} ([a,b]) \colon
    (\exists \tau>0) \;(\exists h>0)\;(\exists C >0) \\
    \max_{x\in[a,b]}|f^{(n)}(x)| \leq 
   C h^{n^\sigma } M^{\tau, \sigma}_{n},
   \; n\in\mathbb N_0
   \Big\}
\end{multline}
and 
\begin{multline*} 
    \mathscr E_{ (\sigma )} ([a,b])  =
    \Big\{ f \in C^{\infty} ([a,b]) \colon
    (\forall \tau>0) \;(\forall h>0)\;(\exists C >0) \\
    \max_{x\in[a,b]}|f^{(n)}(x)| \leq 
   C h^{n^\sigma } M^{\tau, \sigma}_{n},
   \;n\in\mathbb N_0
   \Big\}.
\end{multline*}

It is clear that $\mathcal E_{\{\sigma\}}([a,b])\subseteq \mathscr E_{\{\sigma \}} ([a,b])$ and that $\mathcal E_{(\sigma)}([a,b])\supseteq \mathscr E_{ (\sigma )} ([a,b])$. We show that the opposite inclusions also hold and, moreover, that these spaces coincide as locally convex spaces.

\begin{theorem} \label{thm:with-or-without-h}
Let $\sigma>1$ and let $M_0 ^{\tau,\sigma}=1$,
and $M_n ^{\tau,\sigma}=n^{\tau n^{\sigma}}$, $n\in \mathbb N$.
Then 
\begin{equation*}
\mathcal E_{\sigma }([a,b])
= \mathscr E_{\sigma }([a,b]),    
\end{equation*}
 as locally convex spaces.
\end{theorem}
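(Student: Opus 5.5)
The plan is to prove the two missing inclusions, and then compare the inductive and projective limit topologies. The only tool needed is inequality \eqref{jede h}, which follows from Lemma~\ref{lm1}.

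\emph{Roumieu case.} Let $f\in\mathscr E_{\{\sigma\}}([a,b])$, so that $\max|f^{(n)}|\le C h^{n^\sigma}M^{\tau,\sigma}_n$. Apply \eqref{jede h} with $\tau$ replaced by $2\tau$:
\[
h^{n^\sigma}M^{\tau,\sigma}_n\le A\,M^{2\tau,\sigma}_n,
\]
where $A=e^{\frac{\tau}{\sigma e}h^{\sigma/\tau}}$, computed from Lemma~\ref{lm1} via $h^{n^\sigma}/n^{\tau n^\sigma}\le \sup_\rho h^{\rho^\sigma}/\rho^{\tau\rho^\sigma}$. This gives $f\in\mathcal E_{\{\sigma\}}([a,b])$ with parameters $(2\tau,CA)$. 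For the topology, write $\mathcal E_{\{\sigma\}}=\varinjlim_\tau \mathcal E^{\tau}$ and $\mathscr E_{\{\sigma\}}=\varinjlim_{\tau,h}\mathscr E^{\tau,h}$, where $\mathcal E^\tau$ and $\mathscr E^{\tau,h}$ are the Banach spaces with norms $\sup_n \max|f^{(n)}|/M_n^{\tau,\sigma}$ and $\sup_n\max|f^{(n)}|/(h^{n^\sigma}M_n^{\tau,\sigma})$, respectively. The estimate above shows that the inclusion $\mathscr E^{\tau,h}\hookrightarrow\mathcal E^{2\tau}$ is continuous with norm at most $A$. Conversely, $\mathcal E^\tau=\mathscr E^{\tau,1}$ isometrically. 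Hence each step of one inductive system embeds continuously into a step of the other, so the two inductive limit topologies coincide. If one prefers a countable spectrum, one can restrict to $\tau,h\in\mathbb N$ by monotonicity in both parameters.

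\emph{Beurling case.} Let $f\in\mathcal E_{(\sigma)}([a,b])$ and fix $\tau,h>0$. Use the hypothesis with $\tau/2$ to get $\max|f^{(n)}|\le C M^{\tau/2,\sigma}_n$. The goal is the bound $\le C' h^{n^\sigma}M_n^{\tau,\sigma}$, that is, $M_n^{\tau/2,\sigma}\le A\,h^{n^\sigma}M^{\tau,\sigma}_n$. This is \eqref{jede h} applied with $h^{-1}$ in place of $h$. In the same way, the norm of $\mathscr E^{\tau,h}$ is dominated by $A$ times the norm of $\mathcal E^{\tau/2}$, while the norm of $\mathcal E^\tau$ equals that of $\mathscr E^{\tau,1}$. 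So the two families of seminorms are mutually dominated, and the projective limit topologies coincide.

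The main point requiring care is that \eqref{jede h} must hold for arbitrary $h>0$, including $h<1$ in the Beurling case, with a constant independent of $n$. Lemma~\ref{lm1} guarantees this, since the supremum over $\rho>0$ is finite for every $h>0$; the case $n=0$ is trivial because $M_0=1$. A second small check is that the families of norms are monotone in $\tau$ and $h$, so that the limits are well defined; this follows from $n^{\tau n^\sigma}$ being increasing in $\tau$ for $n\ge1$.
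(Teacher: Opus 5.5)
Your proposal is correct and is essentially the paper's argument. The paper also reduces the statement to continuity of the step inclusions $\mathcal E_{h,\frac{\tau}{2},\sigma}([a,b])\hookrightarrow\mathcal E_{1,\tau,\sigma}([a,b])$ (citing Lemma~\ref{tau12}) plus the universal property of the inductive (respectively projective) limit; you instead derive that step estimate directly from \eqref{jede h} with the explicit constant $A=e^{\frac{\tau}{\sigma e}h^{\sigma/\tau}}$, and you write out the Beurling case (using $h^{-1}$ in place of $h$), which the paper leaves to the reader.
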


The proof of Theorem \ref{thm:with-or-without-h} follows as a consequence of Lemma \ref{lm1} by the same argument as in \cite[Theorem~1]{TTZ2025}. However, in Section \ref{subsec 2.3}, we present an alternative proof based on topological arguments.

\begin{remark}
   It follows from the proof of \cite[Theorem~1]{TTZ2025} that in the definition of $\mathscr E_{(\sigma)}([a,b])$ given by \eqref{eq:GS-with-factor}, the condition $(\forall h>0)$ can be equivalently replaced by $(\exists h>0)$.
\end{remark}

\begin{remark}
 Here, we emphasize another important difference from the setting in \cite{PAMM2022} and \cite{RS2024}, where the spaces under consideration are defined using the geometric factor $h^n$. In our case, however, the factor $h^{n^\sigma}$ appears instead, leading to essentially different techniques and ideas in the proofs.
\end{remark}

\subsection{Topological properties of extended Gevrey classes}
\label{subsec 2.3}

In this section, we present the proof of Theorem \ref{thm:with-or-without-h}. This result is important as it will be used throughout the paper. However, its proof is of a topological nature and would distract from the main line of exposition. To this end, we first discuss the topological structure of the extended Gevrey spaces. Moreover, this section is included for the sake of completeness, since, to the best of our knowledge, Theorem \ref{thm:with-or-without-h} has not been explicitly formulated and proved elsewhere in the literature.

We start by defining constructive spaces.

\begin{definition}
Let $\sigma>1$, $\tau>0$, $h>0$, and let the defining weight sequence $(M_{n}^{\tau ,\sigma})_{n\in \mathbb{N}_0}$ be given by \eqref{eq:the-sequence}. Then we define the space (cf. \cite{PTT})
\[
\mathcal E_{h,\tau,\sigma}([a,b])
:=
\left\{
f\in C^{\infty}([a,b]) :
\|f\|_{h,\tau,\sigma}=\sup_{n\in\mathbb{N}_0} \frac{ \max_{x\in[a,b]}|f^{(n)}(x)|}{ h^{n^{\sigma}}M^{\tau,\sigma}_{n}} < \infty
\right\}.
\]
\end{definition}

These are Banach spaces with respect to the norm $\|\cdot\|_{h,\tau,\sigma}$. We can then write the extended Gevrey spaces from Definition \ref{Def:extGS} as follows.

Let $\sigma>1$ and let the defining weight sequence $(M_{n}^{\tau ,\sigma})_{n\in \mathbb{N}_0}$ be given by \eqref{eq:the-sequence}. Then the extended Gevrey space of Roumieu type is given by
\begin{equation*}
\mathcal E_{\{\sigma\}} ([a,b])
=\bigcup_{\tau>0}\mathcal E_{1,\tau,\sigma}([a,b]),
\end{equation*}
endowed with the inductive limit topology, and the extended Gevrey space of Beurling type is given by
\begin{equation*}
\mathcal E_{(\sigma)} ([a,b])
=\bigcap_{\tau>0} \mathcal E_{1,\tau,\sigma}([a,b]),
\end{equation*}
endowed with the projective limit topology.

The following result will be used in the sequel. We omit the proof, since it follows directly from \cite[Proposition~2.1]{PTT}.

\begin{lemma}\label{tau12}
Let $\sigma>1$, $0<\tau_1<\tau_2$, and $h_1,h_2>0$ be arbitrary. Then the canonical inclusion
\[
\mathcal E_{h_1,\tau_1,\sigma}([a,b]) \hookrightarrow \mathcal E_{h_2,\tau_2,\sigma}([a,b])
\]
is continuous.
\end{lemma}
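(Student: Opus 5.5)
The plan is to prove the continuity of the inclusion in Lemma~\ref{tau12} by a direct norm estimate. Since both spaces are Banach spaces and the inclusion is linear, it suffices to find a constant $A=A(h_1,h_2,\tau_1,\tau_2,\sigma)>0$ such that
\[
\|f\|_{h_2,\tau_2,\sigma}\le A\,\|f\|_{h_1,\tau_1,\sigma},\qquad f\in \mathcal E_{h_1,\tau_1,\sigma}([a,b]).
\]
Such an estimate shows at once that $\mathcal E_{h_1,\tau_1,\sigma}([a,b])\subseteq \mathcal E_{h_2,\tau_2,\sigma}([a,b])$ and that the inclusion is bounded, hence continuous.

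For every $n\in\mathbb N_0$ I will write
\[
\frac{\max_{x\in[a,b]}|f^{(n)}(x)|}{h_2^{n^\sigma}M_n^{\tau_2,\sigma}}
=\frac{\max_{x\in[a,b]}|f^{(n)}(x)|}{h_1^{n^\sigma}M_n^{\tau_1,\sigma}}\cdot
\frac{h_1^{n^\sigma}M_n^{\tau_1,\sigma}}{h_2^{n^\sigma}M_n^{\tau_2,\sigma}}
\le \|f\|_{h_1,\tau_1,\sigma}\cdot q_n ,
\]
where the ratio simplifies to
\[
q_n:=\frac{(h_1/h_2)^{n^\sigma}}{n^{(\tau_2-\tau_1)n^\sigma}}\quad (n\ge1),\qquad q_0=1 .
\]
Thus everything reduces to showing $\sup_{n\in\mathbb N_0} q_n<\infty$.

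This bound is exactly where Lemma~\ref{lm1} enters, applied with $h=h_1/h_2>0$ and with $\tau_2-\tau_1>0$ in place of $\tau$. The hypothesis $\tau_1<\tau_2$ is essential here, since it guarantees a positive exponent. For $n\ge1$ this gives
\[
q_n\le \sup_{\rho>0}\frac{(h_1/h_2)^{\rho^\sigma}}{\rho^{(\tau_2-\tau_1)\rho^\sigma}}
= \exp\Big(\tfrac{\tau_2-\tau_1}{\sigma e}(h_1/h_2)^{\frac{\sigma}{\tau_2-\tau_1}}\Big).
\]
Together with $q_0=1$, we may take $A=\max\{1,\exp(\tfrac{\tau_2-\tau_1}{\sigma e}(h_1/h_2)^{\sigma/(\tau_2-\tau_1)})\}$, which completes the argument.

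There is no real obstacle. The only point needing care is the separate treatment of $n=0$, which follows from the convention $M_0^{\tau,\sigma}=1$, and the observation that the case $h_1>h_2$ (supergeometric growth of $(h_1/h_2)^{n^\sigma}$) is absorbed by the strictly larger exponent $\tau_2$.
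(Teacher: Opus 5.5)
Your proof is correct: the ratio $q_n=(h_1/h_2)^{n^\sigma}n^{-(\tau_2-\tau_1)n^\sigma}$ is bounded by Lemma~\ref{lm1} with $h=h_1/h_2$ and exponent $\tau_2-\tau_1>0$, and together with $q_0=1$ this gives the norm bound, hence continuity. The paper gives no proof and only cites \cite[Proposition~2.1]{PTT}; your direct estimate is the standard argument for such inclusions, and it is the same mechanism by which the paper obtains \eqref{jede h} from Lemma~\ref{lm1}, which is your special case $\tau_1=\tau/2$, $\tau_2=\tau$, $h_2=1$.
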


Let $\sigma>1$ and $0<\tau_1<\tau_2$. The canonical inclusion
$\mathcal E_{1,\tau_1,\sigma}([a,b])\hookrightarrow \mathcal E_{2,\tau_1,\sigma}([a,b])$
is continuous and nuclear (see \cite[Theorem~3.1]{PTT1}). From the previous lemma it follows that the embedding
\[
\mathcal E_{2,\tau_1,\sigma}([a,b]) \hookrightarrow \mathcal E_{1,\tau_2,\sigma}([a,b])
\]
is continuous, and therefore the composition
\[
\mathcal E_{1,\tau_1,\sigma}([a,b])
\hookrightarrow
\mathcal E_{2,\tau_1,\sigma}([a,b])
\hookrightarrow
\mathcal E_{1,\tau_2,\sigma}([a,b])
\]
is nuclear, as a composition of a nuclear and a continuous mapping (see \cite[Proposition~47.1]{Tr}).

In view of the obvious inclusions $\mathcal{E}_{1,\tau_1,\sigma}([a,b])\subset \mathcal{E}_{1,\tau_2,\sigma}([a,b])$, $\tau_1\leq\tau_2$, the extended Gevrey spaces can be represented as the union and, respectively, the intersection of countable families of Banach spaces,  namely 
$$\mathcal E_{\{\sigma\}} ([a,b])
=\bigcup_{n\in\mathbb N}\mathcal{E}_{1,n,\sigma}([a,b])\;\;\text{ and } \;\;\mathcal E_{(\sigma)} ([a,b]) =\bigcap_{n\in\mathbb N}\mathcal{E}_{1,\frac{1}{n},\sigma}([a,b]).$$
 
Hence, by \cite[Theorem~25.1]{MV}, the space $\mathcal E_{(\sigma)} ([a,b])$ is a Fréchet space (or simply an $(F)$-space). Moreover, \cite[Proposition~28.4]{MV} implies that $\mathcal E_{(\sigma)}([a,b])$ is nuclear, that is, an $(FN)$-space (Fréchet nuclear space).

Furthermore, by \cite[Theorem~25.19]{MV}, the space $\mathcal E_{\{\sigma\}}([a,b])$ is the strong dual of a Fréchet space, i.e., a $(DF)$-space. In addition, \cite[Section~4.1.1]{Pietsch} shows that $\mathcal E_{\{\sigma\}}([a,b])$ is a $(DFN)$-space, that is, the strong dual of a Fréchet nuclear space.

Note that in Definition \ref{Def:extGS} the value $h=1$ was fixed in the definition of the spaces $\mathcal E_{\sigma}([a,b])$ of both Roumieu and Beurling type. In the sequel, we show that taking unions and intersections with respect to $h>0$, endowed with the corresponding inductive and projective limit topologies, does not lead to new spaces.

To this end, we first observe that
\begin{equation*}\label{eq:GS-with-factor sa h}
   \mathscr E_{\{\sigma\}}([a,b])
   = \bigcup_{\tau>0}\bigcup_{h>0} \mathcal E_{h,\tau,\sigma}([a,b]),
\end{equation*}
and
\begin{equation*}
   \mathscr E_{(\sigma)}([a,b])
   = \bigcap_{\tau>0}\bigcap_{h>0} \mathcal E_{h,\tau,\sigma}([a,b]),
\end{equation*}
equipped with the inductive (respectively projective) limit topology. As in the case of $\mathcal E_{\sigma}([a,b])$, these spaces are (DFN)-spaces (respectively (FN)-spaces).

Now we can prove Theorem \ref{thm:with-or-without-h}.

\begin{proof}[\textbf{Proof of Theorem \ref{thm:with-or-without-h}}]
We consider only the Roumieu case. The Beurling case follows by analogous arguments using properties of the projective limit topology.

Let $h>0$ be arbitrary. From Lemma \ref{tau12} it follows that the canonical inclusion
$$\mathcal E_{h,\frac{\tau}{2},\sigma}([a,b]) \hookrightarrow \mathcal E_{1,\tau,\sigma}([a,b])$$
is continuous. Taking the inductive limit with respect to $\tau$ yields the continuity of the embedding
$$\mathcal E_{h,\frac{\tau}{2},\sigma}([a,b]) \hookrightarrow \mathcal E_{\{\sigma\}}([a,b]).$$
Since this holds for every $h,\tau>0$, taking the union over all such parameters yields
\[
\mathscr E_{\{\sigma\}}([a,b]) \subseteq \mathcal E_{\{\sigma\}}([a,b]).
\]
Moreover, the universal property of the inductive limit topology implies that this embedding is continuous. The opposite inclusion follows by the same argument.
\end{proof}

\section{Interpolation problem in extended Gevrey spaces}
\label{sec:3}

In this section, we establish the main result of the paper, motivated by the interpolation results in \cite{PAMM2022,RS2024}. We focus on quantifying the parameter changes arising in the interpolation process, explicitly determining the parameters that yield control of derivatives of all orders from those controlling the derivatives at prescribed, geometrically spaced orders.

The proof of the main theorem relies on the following classical inequality of Cartan and Gorny, which provides an estimate for intermediate derivatives in terms of derivatives with known behavior (see \cite{Cartan, Gorny}).

\begin{lemma}\label{cglemma}
	Let $m\in \mathbb{N}$ such that $m\geq 2$. Let $g\in C^{m}([a,b])$ and set $$G_{j}=\max_{x\in [a,b]}|g^{(j)}(x)|,\quad j=0,\dots,m.$$ Then, for every $l\in\{1,\dots,m-1\}$ 
    \begin{equation}\label{cart gorny eq}
			G_{l}\leq 2\bigg(\dfrac{e^{2}m}{l}\bigg)^{l}G_{0}^{1-\frac{l}{m}}\bigg(\max\bigg\{m!G_{0}\bigg(\dfrac{2}{b-a}\bigg)^{m},G_{m}\bigg\}\bigg)^{\frac{l}{m}}.
	\end{equation}
\end{lemma}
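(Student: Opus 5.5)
We prove the Cartan--Gorny inequality (Lemma~\ref{cglemma}) by reduction to a Landau--Kolmogorov type inequality on an interval, followed by scaling. The route goes through an intermediate inequality of the form
\[
G_l \le C_{m,l}\, G_0^{1-\frac lm}\,\widetilde G_m^{\frac lm},
\qquad \widetilde G_m:=\max\Big\{m!\,G_0\Big(\tfrac{2}{b-a}\Big)^m,\;G_m\Big\},
\]
with $C_{m,l}=2\big(e^2m/l\big)^l$. First, both sides transform consistently under the affine change of variables $x\mapsto a+(b-a)t$. We may therefore normalize to $[a,b]=[-1,1]$, so that $2/(b-a)=1$. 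Replacing $g$ by $g/G_0$ (the case $G_0=0$ is trivial), we may also take $G_0=1$. The claim then reads
\[
G_l\le 2(e^2m/l)^l\,\widetilde G_m^{l/m},\qquad \widetilde G_m=\max\{m!,G_m\}\ge m!.
\]

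\textbf{Local estimate on a subinterval.} Fix $x_0\in[-1,1]$ and choose a subinterval $I\subset[-1,1]$ of length $\delta\le 2$ containing $x_0$. Write $g=P+R$ on $I$, where $P$ is the Lagrange interpolation polynomial of degree $m-1$ at $m$ Chebyshev-type nodes in $I$. The remainder satisfies $|R^{(l)}|\lesssim \delta^{m-l}G_m/(m-l)!$ times a combinatorial factor, by Rolle's theorem applied repeatedly to the remainder. For the polynomial part, Markov's inequality gives $\|P^{(l)}\|_I\le (2/\delta)^l T_{m-1}^{(l)}(1)\,\|P\|_I$. Here $\|P\|_I\le \Lambda(G_0+\text{small})$, where $\Lambda$ is the Lebesgue constant; equivalently, one can bound $P^{(l)}$ directly through divided differences. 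This yields
\[
G_l\le A_{m,l}\,\delta^{-l}+B_{m,l}\,\delta^{m-l}G_m,\qquad 0<\delta\le 2.
\]

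\textbf{Optimization in $\delta$.} Next we minimize the right-hand side over $\delta$. The balancing choice is $\delta\approx (A_{m,l}/(B_{m,l}G_m))^{1/m}$. This choice is admissible (i.e.\ $\delta\le 2$) exactly when $G_m\gtrsim m!$. If instead $G_m$ is small, we take $\delta=2$, and $G_m$ is replaced by $m!$ in the final bound. This case split is precisely the source of the maximum $\max\{m!G_0(2/(b-a))^m,G_m\}$ in the statement. In both cases we obtain $G_l\le C\,G_0^{1-l/m}\widetilde G_m^{l/m}$.

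\textbf{Main obstacle: the explicit constant.} The difficult step is obtaining the constant $2(e^2m/l)^l$. It requires sharp bounds: $T_{m-1}^{(l)}(1)=\prod_{k<l}\frac{(m-1)^2-k^2}{2k+1}\le m^{2l}/(2l-1)!!$, together with Stirling-type estimates such as $m^l/l!\le (em/l)^l$ and $\binom{m}{l}\le(em/l)^l$. A product of two such factors produces the $e^2$. Rather than relying on Lebesgue constants, we would follow Cartan's argument. It expresses $g^{(l)}(x_0)$ through an $m$-th order divided difference identity on equally spaced nodes of step $\delta/m$. The polynomial part is then controlled by $\sum_j|\ell_j^{(l)}(x_0)|$, which is bounded by $(e\,m/\delta)^l\cdot(\text{const})^l$. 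This keeps all factors explicit, and the final AM--GM optimization contributes the factor $2$.
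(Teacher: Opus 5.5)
The paper does not prove this lemma; it quotes the classical Cartan--Gorny inequality. Judged on its own, your proposal has a genuine gap. The nontrivial content of the statement is the explicit constant $2(e^2m/l)^l$ and the threshold $m!\,G_0(2/(b-a))^m$. You leave exactly this open, and neither route you indicate can deliver it.

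\textbf{The Chebyshev--Lagrange route.} Here the Lebesgue constant is fatal. Take $l=1$, $[a,b]=[-1,1]$, $G_0=1$. Then $A_{m,1}=2(m-1)^2\Lambda_m$ and $B_{m,1}\ge 1/(m-1)!$. The best your scheme can give, for $G_m$ large, is asymptotically $2e\,\Lambda_m\,m\,G_m^{1/m}$, while the target is $2e^2m\,G_m^{1/m}$. The Lebesgue constants of any node system grow at least like $\frac{2}{\pi}\ln m-O(1)$, so this fails for large $m$.

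\textbf{The fallback you attribute to Cartan.} As stated it is false. For equally spaced nodes $x_0+jh$, $j=0,\dots,m-1$, one has $\ell_j(x_0+th)=\binom{t}{j}\binom{m-1-t}{m-1-j}$. Hence $h\,\ell_j'(x_0)=(-1)^{j-1}\binom{m-1}{j}/j$ for $j\ge1$, and so $\sum_j|\ell_j'(x_0)|\ge (2^{m-1}-1)/((m-1)h)$. This is exponential in $m$, not of size $(em/\delta)\cdot\mathrm{const}$.

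\textbf{The threshold.} The admissibility threshold of your balancing $\delta$ is $lA_{m,l}/((m-l)B_{m,l}2^m)$, which has no reason to equal $m!$. So the claim that your case split is ``precisely the source'' of the maximum in the statement is unfounded.

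\textbf{The missing idea.} Use the Taylor polynomial instead of an interpolation polynomial. For $x_0\in[a,b]$ and $0<\delta\le(b-a)/2$, one of the intervals $J=[x_0,x_0+\delta]$ or $J=[x_0-\delta,x_0]$ lies in $[a,b]$; this is where $2/(b-a)$ comes from. Let $Q$ be the Taylor polynomial of $g$ at $x_0$ of degree $m-1$. Then $Q^{(l)}(x_0)=g^{(l)}(x_0)$ exactly, so no remainder estimate for derivatives is needed. Taylor's formula gives $\|Q\|_J\le G_0+G_m\delta^m/m!$. V.~A.~Markov's inequality on $J$ then yields
\[
|g^{(l)}(x_0)|\le \Big(\frac{2}{\delta}\Big)^l T^{(l)}_{m-1}(1)\Big(G_0+\frac{\widetilde G_m\,\delta^m}{m!}\Big),
\]
where $\widetilde G_m$ is the maximum in the statement. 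Assume $G_0>0$ and choose $\delta^m=m!\,G_0/\widetilde G_m$. This is admissible precisely because $\widetilde G_m\ge m!\,G_0(2/(b-a))^m$. It makes the bracket equal to $2G_0$, which is the factor $2$. Hence
\[
G_l\le 2\cdot 2^lT^{(l)}_{m-1}(1)\,(m!)^{-l/m}\,G_0^{1-l/m}\,\widetilde G_m^{l/m}.
\]
The bounds $T^{(l)}_{m-1}(1)\le m^{2l}/(2l-1)!!$, $m!\ge(m/e)^m$ and $(2l-1)!!\ge(2l/e)^l$ turn the constant into exactly $2(e^2m/l)^l$.
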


We shall also need the following two auxiliary results. Since their proofs are purely technical, they are postponed to the Appendix.

\begin{lemma}\label{max1}
	Let $D>e$. The function $\phi:[1,\infty)\to \mathbb{R}^{+}$, defined by $\phi(x)=\left(\dfrac{D}{x}\right)^{x}$, attains a unique maximum at $x=\dfrac{D}{e}$.
\end{lemma}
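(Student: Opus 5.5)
We study $\phi$ through its logarithm. On $[1,\infty)$ we have $\phi(x)>0$, so we set
\[
\psi(x)=\ln\phi(x)=x\bigl(\ln D-\ln x\bigr),
\]
and it suffices to show that $\psi$ attains a unique maximum on $[1,\infty)$ at $x=D/e$, because $\ln$ is strictly increasing. The hypothesis $D>e$ guarantees that $D/e>1$, so the candidate point lies in the interior of the domain.

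Next we differentiate:
\[
\psi'(x)=\ln D-\ln x-1=\ln\frac{D}{ex}.
\]
Hence $\psi'(x)>0$ for $1\le x<D/e$, $\psi'(D/e)=0$, and $\psi'(x)<0$ for $x>D/e$. Alternatively, one can note that $\psi''(x)=-1/x<0$, so $\psi$ is strictly concave on $[1,\infty)$ with a single critical point there.

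It follows that $\psi$ is strictly increasing on $[1,D/e]$ and strictly decreasing on $[D/e,\infty)$. Consequently, for every $x\neq D/e$ in $[1,\infty)$ we have $\psi(x)<\psi(D/e)$. This gives existence and uniqueness of the maximizer, and the maximal value is
\[
\phi(D/e)=e^{D/e}.
\]
That value is not required by the statement, but it is worth recording for later use.

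There is no real obstacle in this argument. The only point needing care is checking that $D/e$ belongs to the domain $[1,\infty)$, which is exactly where $D>e$ is used. If instead $D\le e$, the function $\phi$ would be decreasing on $[1,\infty)$ and the maximum would sit at the endpoint $x=1$.
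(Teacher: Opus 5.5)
Your proof is correct and follows essentially the same route as the paper's. Both pass to $\psi=\ln\phi$, find the unique critical point $x=D/e$ of $\psi'(x)=\ln D-\ln x-1$, and conclude from the sign of $\psi'$ or from the strict concavity $\psi''(x)=-1/x<0$. You correctly note that $D>e$ gives $D/e>1$, so the critical point lies in $[1,\infty)$; the paper's appendix has the slip ``$D/e<1$'' at this step. The value $\phi(D/e)=e^{D/e}$ you record is exactly what the paper uses to bound $B_k$ in the proof of Theorem~\ref{thm: main}.
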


\begin{lemma}\label{max2}
		Let $\sigma>1$. Then 
        \begin{equation*}
			x\ln x\leq Rx^{\sigma},  \quad x\geq 1,\quad\text{where}\quad R=\frac{1}{e(\sigma-1)}.
		\end{equation*}
\end{lemma}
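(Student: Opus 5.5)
The claim is that $x\ln x\le Rx^\sigma$ for $x\ge 1$ with $R=\frac{1}{e(\sigma-1)}$. I would reduce it to a one-variable maximization. For $x\ge1$ we have $x>0$, so dividing by $x^\sigma$ shows the inequality is equivalent to
\[
\psi(x):=x^{1-\sigma}\ln x\le \frac{1}{e(\sigma-1)},\qquad x\ge 1 .
\]
It therefore suffices to show that $\sup_{x\ge1}\psi(x)$ equals (or is at most) $R$.

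Next I would differentiate. We get
\[
\psi'(x)=x^{-\sigma}\bigl(1-(\sigma-1)\ln x\bigr).
\]
This is positive for $1\le x<e^{1/(\sigma-1)}$ and negative for $x>e^{1/(\sigma-1)}$. Hence $\psi$ attains its global maximum on $[1,\infty)$ at $x_0=e^{1/(\sigma-1)}$, which indeed lies in $[1,\infty)$ because $\sigma>1$. Evaluating there gives
\[
\psi(x_0)=e^{-1}\cdot\frac{1}{\sigma-1}=R,
\]
since $x_0^{1-\sigma}=e^{-1}$ and $\ln x_0=1/(\sigma-1)$. Multiplying back by $x^\sigma$ then yields $x\ln x\le Rx^\sigma$.

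A cleaner alternative avoids derivatives altogether. Set $y=(\sigma-1)\ln x\ge0$ and use the elementary inequality $y\le e^{y-1}$, which is $e^{t}\ge 1+t$ with $t=y-1$. This gives $(\sigma-1)\ln x\le e^{-1}x^{\sigma-1}$, and multiplying by $x/(\sigma-1)$ gives the claim directly.

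I expect no real obstacle. The only points needing care are the sign analysis of $\psi'$ and checking that the critical point $x_0$ lies in the domain $[1,\infty)$.
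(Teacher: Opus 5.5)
Your proof is correct and follows the paper's argument: the paper also studies $x^{1-\sigma}\ln x$ on $[1,\infty)$, locates the unique critical point $e^{1/(\sigma-1)}$, and evaluates the function there to obtain $R$. The paper certifies the maximum with the second derivative at the critical point, whereas you use the sign of the first derivative, which directly gives the global maximum; your derivative-free variant via $e^{t}\ge 1+t$ is also valid.
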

	
\medskip

    We can now formulate and prove the main theorem of this paper.
	
	\begin{theorem}\label{thm: main} 
		Let $\sigma>1$ be given. Let $f\in C^{\infty}([a,b])$, and let $(d_n)_{n\in\mathbb{N}_0}$ be an increasing divergent sequence of positive integers satisfying
        \begin{equation}\label{gap}
			(\exists L>1) \quad \dfrac{d_{n+1}}{d_{n}}\leq L, \quad n\in \mathbb{N}_0.
		\end{equation}
		Assume that \begin{equation}\label{condition}
			(\exists \tau_0 >0)(\exists C_0>0) \quad \max_{x\in[a,b]}|f^{(d_n)}(x)|\leq C_0 M_{d_{n}}^{\tau_0 , \sigma}, \quad n\in \mathbb{N}_0.
		\end{equation}
        Then $f\in \mathcal E_{\{\sigma\}}([a,b])$. More precisely,
         \begin{equation*}
			\max_{x\in[a,b]}|f^{(n)}(x)|\leq C M_{n}^{\tau , \sigma}, \quad n\in \mathbb{N}_0,
		\end{equation*}
        where $\tau=2\tau_0L^\sigma$ and $C=2C_0 A$, with $A$ as in \eqref{jede h}.
	\end{theorem}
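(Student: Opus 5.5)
Fix $n\in\mathbb N_0$ not among the $d_k$; the $d_k$ are covered directly by \eqref{condition}, since $M^{\tau_0,\sigma}\le M^{\tau,\sigma}$ and $A\ge 1$ may be assumed. If $n<d_0$, only finitely many orders are involved. Since $f\in C^\infty([a,b])$, they are absorbed by Lemma~\ref{cglemma} with $m=d_0$ and $l=n$, which bounds $G_n$ by a constant depending on $G_0$, $G_{d_0}$ and $b-a$. (If one insists on the constant $2C_0A$ exactly, one would treat these orders the same way as below, with $m=d_0$.) Otherwise choose $k$ with $d_k<n<d_{k+1}$ and set $m=d_{k+1}$, so that $m\le Ln$ by \eqref{gap}.

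The plan is to apply Lemma~\ref{cglemma} to $g=f$ on $[a,b]$ with this $m$ and $l=n$. With $G_0\le C_0$ (taking $d_0$ aside, or, more cleanly, using $G_0$ as a fixed constant) and $G_m\le C_0M^{\tau_0,\sigma}_m$, the maximum in \eqref{cart gorny eq} becomes
\[
\max\{m!\,G_0(2/(b-a))^m,\ C_0 m^{\tau_0 m^\sigma}\}.
\]
Since $m!\,(2/(b-a))^m\le h^{m^\sigma}m^{\tau_0 m^\sigma}$ for a suitable $h$ depending on $b-a$ (use $m!\le m^m$ and absorb $m^m$ and the geometric factor into $h^{m^\sigma}$ via Lemma~\ref{max2}-type estimates), the maximum is at most $C_0 h^{m^\sigma}M^{\tau_0,\sigma}_m$. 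Raising to the power $l/m=n/m$ gives
\[
\bigl(C_0 h^{m^\sigma}m^{\tau_0m^\sigma}\bigr)^{n/m}=C_0^{n/m}h^{nm^{\sigma-1}}m^{\tau_0 nm^{\sigma-1}},
\]
and since $m\le Ln$ this is at most $C_0\,h^{L^{\sigma-1}n^\sigma}(Ln)^{\tau_0L^{\sigma-1}n^\sigma}$.

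Next I handle the prefactor $2(e^2m/n)^n$. By Lemma~\ref{max1} (with $D=e^2m$) it is bounded by $2e^{em}\le 2e^{eLn}$, which is a geometric factor, hence at most $h_1^{n^\sigma}$. Likewise $L^{\tau_0L^{\sigma-1}n^\sigma}$ is a supergeometric factor $h_2^{n^\sigma}$. Collecting terms yields
\[
G_n\le 2C_0\,H^{n^\sigma}M_n^{\tau_0L^{\sigma-1},\sigma}\le 2C_0\,H^{n^\sigma}M_n^{\tau/2,\sigma},
\]
where $\tau/2=\tau_0L^\sigma\ge\tau_0L^{\sigma-1}$. Finally, \eqref{jede h} removes $H^{n^\sigma}$ at the cost of doubling $\tau$, giving $G_n\le 2C_0AM_n^{\tau,\sigma}$.

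The main obstacle is the bookkeeping of the $G_0^{1-l/m}$ and $m!(2/(b-a))^m$ terms, together with ensuring that all stray geometric and supergeometric factors collapse into a single $H^{n^\sigma}$ whose size depends only on $L$, $\sigma$, $\tau_0$ and $b-a$. The extra factor $L$ in $\tau=2\tau_0L^\sigma$, compared with the $L^{\sigma-1}$ that arises naturally, suggests the authors absorbed $(Ln)^{\tau_0 L^{\sigma-1}n^\sigma}$ crudely, for instance via $(Ln)^{x}\le n^{Lx}$-type bounds. I would settle that choice there, using Lemma~\ref{max2} to convert the $x\ln x$ terms into multiples of $x^\sigma$.
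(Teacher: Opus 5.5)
Your analytic estimates are sound. Applying Lemma~\ref{cglemma} to $g=f$ with $m=d_{k+1}$ and $l=n$ does prove $f\in\mathcal E_{\{\sigma\}}([a,b])$. Because you keep the weight $l/m=n/m$, you even obtain the smaller parameter $2\tau_0L^{\sigma-1}$. The paper loses the extra factor $L$ when it replaces $\theta d_k^\sigma+(1-\theta)d_{k+1}^\sigma$ by $d_{k+1}^\sigma<(Ln)^\sigma$.

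The gap is the step $G_0\le C_0$. With your choice of $g$, $G_0=\max_{[a,b]}|f|$, and \eqref{condition} never controls this quantity. The $d_j$ are positive integers, as they must be for $d_{j+1}/d_j\le L$ to make sense. Adding a large constant to $f$ leaves every $\max_{[a,b]}|f^{(d_j)}|$ unchanged while making $G_0$ arbitrarily large. If you treat $G_0$ as a fixed constant, as you suggest, your argument yields only $G_n\le 2\max\{G_0,C_0\}A\,M_n^{\tau,\sigma}$. So the explicit constant $C=2C_0A$ --- the quantitative content of the theorem --- is not obtained even for the intermediate orders $d_k<n<d_{k+1}$. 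Your parenthetical remedy for $n<d_0$ (again $g=f$, now with $m=d_0$) has the same defect.

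The missing idea is to anchor the interpolation at $d_k$ rather than at $0$, which is what the paper does. In your notation, and writing $F_j=\max_{[a,b]}|f^{(j)}|$, apply Lemma~\ref{cglemma} to $g=f^{(d_k)}$ with $m=d_{k+1}-d_k$ and $l=n-d_k$. Then $G_0=F_{d_k}$ and $G_m=F_{d_{k+1}}$ are both bounded by \eqref{condition}, and $\max_{[a,b]}|f|$ never enters. The paper then splits according to which term realizes the maximum:
\begin{itemize}
\item If it is $m!\,F_{d_k}(2/(b-a))^m$, everything is bounded by $F_{d_k}\le C_0M^{\tau_0,\sigma}_{d_k}\le C_0M^{\tau_0,\sigma}_n$ times supergeometric factors.
\item Otherwise it uses $F_{d_k}^{\theta}F_{d_{k+1}}^{1-\theta}$ with $\theta=(d_{k+1}-n)/(d_{k+1}-d_k)$, and bounds $d_k^{\tau_0\theta d_k^\sigma}d_{k+1}^{\tau_0(1-\theta)d_{k+1}^\sigma}\le (Ln)^{\tau_0(Ln)^\sigma}$.
\end{itemize}
In both cases the prefactor is $2C_0$ before \eqref{jede h} is applied.

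Be aware, though, that for $n<d_0$ no argument can give $C=2C_0A$: take $f$ to be a large constant. The paper's proof starts by choosing $d_k\le n<d_{k+1}$, so it silently omits these finitely many orders as well. For those orders a constant depending on $f$, e.g.\ through $\max_{[a,b]}|f|$, is unavoidable.
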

	
	\begin{proof} 
    For a smooth function $f:[a,b]\subset \mathbb{R} \rightarrow \mathbb{R},$ we define 
\begin{equation*}
	F_{n}=\max_{x\in [a,b]}|f^{(n)}(x)|,\quad  n\in \mathbb{N}_0.
\end{equation*} 
    To prove the theorem, it is sufficient to show that there exist constants $C>0$ and $\tau>0$ such that $F_k\leq CM_{k}^{\tau,\sigma}$ for all $k\in\mathbb{N}_0$.
		Fix $k\in\mathbb N_0$. Choose $n\in\mathbb N_0$ so that $d_{n}\leq k < d_{n+1}.$ If $k=d_{n}$, the claim follows immediately from (\ref{condition}). Hence, we restrict our attention to the case $d_{n}<k<d_{n+1}$.
		Let \begin{equation*}
			\theta =\dfrac{d_{n+1}-k}{d_{n+1}-d_{n}}\qquad\text{and hence}\qquad 1-\theta=
            \dfrac{k-d_{n}}{d_{n+1}-d_{n}}.
		\end{equation*} 
	Clearly, 
    $0<\dfrac{d_{n+1}-k}{d_{n+1}-d_{n}}<1,$ that is $\theta \in (0,1)$.
	
    In what follows, we make use of the Cartan-Gorny inequality \eqref{cart gorny eq}. For this purpose, choose 
    $$g=f^{(d_{n})},\;\; m=d_{n+1}-d_{n} \;\;\text{ and } \;\;l=k-d_{n}$$ Hence, $G_{0}=F_{d_{n}}$, $G_{l}=F_{k}$ and $G_{m}=F_{d_{n+1}}$. 
	\medskip
	Now, in view of \eqref{cart gorny eq} it follows that 
    \begin{equation*}
		F_{k}\leq 2\bigg(\dfrac{e^{2}(d_{n+1}-d_{n})}{k-d_{n}}\bigg)^{k-d_{n}}F_{d_{n}}^{\theta}\max \bigg\{(d_{n+1}-d_{n})!F_{d_{n}}\bigg( \frac{2}{b-a}\bigg)^{d_{n+1}-d_{n}},F_{d_{n+1}}\bigg\}^{1-\theta}.
	\end{equation*}

There are two cases to consider.

{\bf Case 1.}
\begin{equation*}
	\max \bigg\{(d_{n+1}-d_{n})!F_{d_{n}}\bigg( \frac{2}{b-a}\bigg)^{d_{n+1}-d_{n}},F_{d_{n+1}}\bigg\}=(d_{n+1}-d_{n})!F_{d_{n}}\bigg( \frac{2}{b-a}\bigg)^{d_{n+1}-d_{n}}.
\end{equation*} Then we have
 \begin{eqnarray*}
	F_{k}&\leq& 
    2\bigg(\dfrac{e^{2}(d_{n+1}-d_{n})}{k-d_{n}}\bigg)^{k-d_{n}}\bigg((d_{n+1}-d_{n})!\bigg( \frac{2}{b-a}\bigg)^{d_{n+1}-d_{n}}\bigg)^{1-\theta}F_{d_{n}}.
		\end{eqnarray*}
		To simplify the notation, set
	\begin{equation*}
		B_k:=\bigg(\dfrac{e^{2}(d_{n+1}-d_{n})}{k-d_{n}}\bigg)^{k-d_{n}}.
	\end{equation*}
We next derive an upper bound for $B_k$. 
From the assumption (\ref{gap}), it follows that \begin{equation}\label{second}
	d_{n+1}-d_{n}\leq Ld_{n}-d_{n}=d_{n}(L-1),
\end{equation}
	and since $\sigma >1,$ we have
	\begin{equation}\label{ksigma}
		k<k^{\sigma}, \quad k\in \mathbb{N}.
	\end{equation} 
	By Lemma \ref{max1}, (\ref{second}) and (\ref{ksigma}), we obtain
 \begin{eqnarray}\label{first1}		
B_k=\bigg(\dfrac{e^{2}(d_{n+1}-d_{n})}{k-d_{n}}\bigg)^{k-d_{n}}&\leq& \sup_{x\geq 1} 	\bigg(\dfrac{e^{2}(d_{n+1}-d_{n})}{x}\bigg)^{x}\nonumber\\&\leq &\bigg(\dfrac{e^{2}(d_{n+1}-d_{n})}{e(d_{n+1}-d_{n})}\bigg)^{e(d_{n+1}-d_{n})} \\ &\leq& e^{e d_{n}(L-1)}\nonumber < (e^{e(L-1)})^{k^{\sigma}}\nonumber = h_1^{k^{\sigma}}, \nonumber
	\end{eqnarray} 
	where we set $h_1:=e^{e(L-1)}$.
    By the elementary estimate for the factorial, 
    $$(d_{n+1}-d_{n})!\leq (d_{n+1}-d_{n})^{d_{n+1}-d_{n}},$$ and the fact $0<1-\theta<1$, we obtain
	\begin{equation*}
\left((d_{n+1}-d_n)! \left( \frac{2}{b-a} \right)^{d_{n+1}-d_n}\right)^{1-\theta}
\le
\left( \frac{2(d_{n+1}-d_n)}{b-a} \right)^{d_{n+1}-d_n}.
\end{equation*}
	Next, we estimate \begin{equation*}
	\bigg(\frac{2(d_{n+1}-d_{n})}{b-a}\bigg)^{d_{n+1}-d_{n}}.
\end{equation*}
Since $d_{n+1}-d_{n}<d_{n+1}\leq Ld_{n}<Lk$ (see \eqref{gap}), we have \begin{equation*}
    	\bigg(\frac{2(d_{n+1}-d_{n})}{b-a}\bigg)^{d_{n+1}-d_{n}}\leq \bigg(\dfrac{2Lk}{b-a}\bigg)^{Lk}=(Sk)^{Lk},
\end{equation*}
where $S=\dfrac{2L}{b-a}$. By Lemma \ref{max2} and (\ref{ksigma}), we obtain 
\begin{eqnarray}\label{second2}
	(Sk)^{Lk}&=&e^{Lk\ln Sk}= e^{Lk\ln S +Lk\ln k}\leq e^{Lk^{\sigma}\ln S+LRk^{\sigma}} = h_2^{k^{\sigma}},
\end{eqnarray}
where $R$ is the constant from Lemma \ref{max2} and  $ h_2:=e^{L\ln S+LR}.$
	By (\ref{condition}), (\ref{first1}), (\ref{second2}) and the fact that $(M_{n}^{\tau ,\sigma})_{n\in \mathbb{N}_0}$ is an increasing sequence, we obtain \begin{eqnarray}\label{prva ocena}
		F_{k}\leq 2h_1^{k^{\sigma}}h_2^{k^{\sigma}}C_0 M_{k}^{\tau_0 , \sigma}=C'\tilde h^{k^{\sigma}}M_{k}^{\tau_0 , \sigma},
	\end{eqnarray} where $C'=2C_0$ and $\tilde h:=h_1h_2$, and the first case is proved.

%
%
{\bf Case 2.}

 \begin{equation*}
		\max \bigg\{(d_{n+1}-d_{n})!F_{d_{n}}\bigg( \frac{2}{b-a}\bigg)^{d_{n+1}-d_{n}},F_{d_{n+1}}\bigg\}=F_{d_{n+1}}.
\end{equation*} 
	By Lemma \ref{cglemma} and (\ref{condition}), we obtain
		\begin{eqnarray*}
			F_{k}&\leq&2B_kF_{d_{n}}^{\theta}F_{d_{n+1}}^{1-\theta} \\ &\leq& 2 B_k(C_0 d_{n}^{\tau_0 d_{n}^{\sigma}})^{\theta}(C_0 d_{n+1}^{\tau_0 d_{n+1}^{\sigma}})^{1-\theta}\\ &=&2B_kC_0d_{n}^{\tau_0 \theta d_{n}^{\sigma}}d_{n+1}^{\tau_0 (1-\theta)d_{n+1}^{\sigma}}.
		\end{eqnarray*}
Due to (\ref{gap}) and since $d_{n}<k<d_{n+1}$, we obtain $d_{n+1}\leq Ld_{n}<Lk$. Then it follows that \begin{equation*}
			\theta d_{n}^{\sigma}+(1-\theta)d_{n+1}^{\sigma}<\theta d_{n+1}^{\sigma}+(1-\theta)d_{n+1}^{\sigma}<(Lk)^{\sigma}.
		\end{equation*}
By taking the natural logarithm of $d_{n}^{\tau_0 \theta d_{n}^{\sigma}}d_{n+1}^{\tau_0 (1-\theta)d_{n+1}^{\sigma}}$, we obtain 
        \begin{eqnarray*}
			\ln d_{n}^{\tau_0 \theta d_{n}^{\sigma}}d_{n+1}^{\tau_0 (1-\theta)d_{n+1}^{\sigma}}&=&\tau_0 \theta d_{n}^{\sigma}\ln d_{n}+\tau_0 (1-\theta)d_{n+1}^{\sigma}\ln d_{n+1}\\ &\leq& \tau_0 (\theta d_{n}^{\sigma}+(1-\theta)d_{n+1}^{\sigma}) \ln d_{n+1}\leq \tau_0 (Lk)^{\sigma}\ln Lk.
		\end{eqnarray*}
		Then, taking the exponential of both sides of the above inequality, we get \begin{eqnarray}\label{dndn+1}
			d_{n}^{\tau_0 \theta d_{n}^{\sigma}}d_{n+1}^{\tau_0 (1-\theta)d_{n+1}^{\sigma}}&\leq& (Lk)^{\tau_0(Lk)^{\sigma}} = L^{\tau_0 L^{\sigma}k^{\sigma}}k^{\tau_0 L^{\sigma}k^{\sigma}}\nonumber \\ &=& h_3^{k^{\sigma}}k^{\tau' k^{\sigma}}  =h_3^{k^{\sigma}}M_{k}^{\tau',\sigma},
		\end{eqnarray}
		where $h_3:=L^{\tau_0 L^{\sigma}}$ and $\tau'=\tau_0 L^{\sigma}$. 
		From (\ref{first1}) and (\ref{dndn+1}), we conclude that 
		\begin{align}\label{druga ocena}
			F_{k}\leq 2h_1^{k^{\sigma}}C_0 h_3^{k^{\sigma}}M_{k}^{\tau',\sigma}
			 = C' \hat h^{k^\sigma}M_{k}^{\tau',\sigma},
		\end{align}
		where $C':=2C_0$ and $\hat h:=h_1 h_3$, which proves the second case.

        Finally, if $f$ satisfies \eqref{condition} form \eqref{prva ocena} and \eqref{druga ocena} we obtain
        \begin{equation}\label{C'h'}
            F_k\leq C' h'^{k^\sigma} M_k^{\tau',\sigma},\quad k\in\mathbb{N}_0,
        \end{equation}
        where $C':=2C_0$, $h':=\max\{\tilde h,\hat h\}$ and $\tau'=\tau_0 L^{\sigma}$.
        This, with Theorem \ref{thm:with-or-without-h} or more precisely inequality \eqref{jede h}, implies
         \begin{equation*}\label{eq:final_est}
            F_k\leq C M_{k}^{\tau,\sigma},\quad k\in\mathbb{N}_0,
        \end{equation*}
        where $C=C'A$ and $\tau=2\tau'$ and constant $A$ is form inequality \eqref{jede h}. This completes our proof.
	\end{proof}
    
The following result is the Beurling counterpart of Theorem \ref{thm: main}. Its proof follows along the same lines as that of Theorem \ref{thm: main}, with the necessary modifications, and is therefore omitted.

	\begin{theorem}
		Let $\sigma >1$ be given. Let $f\in C^{\infty}([a,b])$ and let $(d_{n})_{n\in \mathbb{N}}$  be an increasing divergent sequence of positive integers satisfying \begin{equation*}\label{gap*}
			(\exists L>1) \quad \dfrac{d_{n+1}}{d_{n}}\leq L, \quad n\in \mathbb{N}_0.
		\end{equation*}
		Assume that \begin{equation*}\label{condition*}
			(\forall \tau_0 >0)(\exists C_0>0) \quad \max_{x\in[a,b]}|f^{(d_n)}(x)|\leq C_0 M_{d_{n}}^{\tau_0 , \sigma}, \quad n\in \mathbb{N}_0.
		\end{equation*}
		Then $f\in \mathcal E_{(\sigma)}([a,b])$.
	\end{theorem}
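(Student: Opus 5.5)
The plan is to run the proof of Theorem~\ref{thm: main} for each fixed $\tau_0$ and then use the Beurling quantifier to reach every target $\tau$. Fix an arbitrary target $\tau>0$ and set
\[
\tau_0:=\frac{\tau}{2L^\sigma}.
\]
By hypothesis there is $C_0=C_0(\tau_0)>0$ with $F_{d_n}\le C_0M^{\tau_0,\sigma}_{d_n}$ for all $n\in\mathbb N_0$. This is exactly hypothesis \eqref{condition} of Theorem~\ref{thm: main} with this particular $\tau_0$. Theorem~\ref{thm: main} itself is quantitative: it outputs the bound with parameter $2\tau_0L^\sigma$. So we may simply invoke it, obtaining
\[
F_k\le 2C_0A\,M_k^{2\tau_0L^\sigma,\sigma}=2C_0A\,M_k^{\tau,\sigma},\qquad k\in\mathbb N_0,
\]
where $A$ depends on $\sigma$, $\tau$ and $h'$.

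Before relying on this, I will check that the constants in that proof are uniform enough. The quantities $h_1=e^{e(L-1)}$ and $h_2=e^{L\ln S+LR}$ depend only on $L$, $\sigma$ and $b-a$. By contrast, $h_3=L^{\tau_0L^\sigma}$ depends on $\tau_0$, but only through a finite value once $\tau_0$ is fixed. Thus for fixed $\tau_0$ the intermediate estimate \eqref{C'h'} holds with $h'$ finite and $\tau'=\tau_0L^\sigma$. Inequality \eqref{jede h}, applied with $2\tau'$ in place of $\tau$ and $h=h'$, then absorbs $h'^{k^\sigma}$ at the cost of doubling the exponent. This yields a finite constant $C=2C_0A$ depending on $\tau$, which is all the Beurling definition requires.

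Since $\tau>0$ was arbitrary, we get: for every $\tau>0$ there exists $C>0$ with $\max_{[a,b]}|f^{(k)}|\le CM_k^{\tau,\sigma}$ for all $k$. By Definition~\ref{Def:extGS}, this means $f\in\mathcal E_{(\sigma)}([a,b])$. Alternatively, one can stop at $F_k\le C'h'^{k^\sigma}M_k^{\tau',\sigma}$ with arbitrarily small $\tau'$ and appeal to Theorem~\ref{thm:with-or-without-h} together with the remark that $(\forall h)$ may be replaced by $(\exists h)$ there.

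The only real obstacle is the bookkeeping just described. One must confirm that no constant in the Case~1 and Case~2 estimates secretly requires $\tau_0$ to be bounded below; this is fine, because the $\tau_0$-dependence enters only through $C_0$ and $h_3$. One must also confirm that the indexing $n\in\mathbb N$ versus $\mathbb N_0$ of $(d_n)$ causes no issue. It does not: for $k<d_0$, the finitely many values $F_k$ are bounded, hence absorbed into the constant.
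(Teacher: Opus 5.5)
Your proposal is correct and is the intended argument: Theorem~\ref{thm: main} is quantitative with output parameter $2\tau_0L^\sigma$, so for each target $\tau$ you choose $\tau_0=\tau/(2L^\sigma)$ and let the Beurling hypothesis supply $C_0(\tau_0)$, which yields the bound with $M^{\tau,\sigma}_k$; this is what the paper's omitted proof ``along the same lines'' amounts to. Your remark that the finitely many orders $k<d_0$ must be absorbed into the constant is a worthwhile addition, since the proof of Theorem~\ref{thm: main} tacitly assumes $k\ge d_0$.
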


\begin{remark}\label{rem 3.6}
    Let us note that the existence of the interpolation in the setting of extended Gevrey spaces could have been shown by applying \cite[Theorem 5.3]{RS2024}, in view of the fact that the supergeometric factor $h^{n^\sigma}$ can be disregarded, as established in Theorem \ref{thm:with-or-without-h}. Nevertheless, we have employed a different approach here in order to provide an explicit description of the passage from the parameters $\tau_0$ and $C_0$ to the resulting parameters $\tau$ and $C$. 
\end{remark}

\section{Applications to extended Gelfand-Shilov spaces}
\label{sec:4}

Gelfand-Shilov type spaces are among the most important and well-known subspaces of Gevrey type spaces that are invariant under the Fourier transform. In what follows, we introduce extended Gelfand–Shilov spaces and prove analogous interpolation theorems in this setting.

\subsection{Extended Gelfand-Shilov spaces}

Let $\sigma>1$ and $\tau>0$. Let $(M_{n}^{\tau,\sigma})_{n\in\mathbb N_0}$ be the weight sequences given by \eqref{eq:the-sequence}. In order to be consistent with the existing literature and the results already developed in this direction, for fixed $\sigma>1$ we will use the notation for the corresponding weight matrix $\mathcal{M}_\sigma$, given by
    \begin{equation}\label{eq:the-weight-matrix}
\mathcal{M}_\sigma=\left\{(M_{n}^{\tau,\sigma})_{n\in\mathbb N_0}\colon \tau>0\right\}.
    \end{equation}

\begin{definition}\label{Def:extGSs}
Let $\sigma>1$ and the weight matrix $\mathcal{M}_\sigma$ be given by 
\eqref{eq:the-weight-matrix}. Then
the extended Gelfand-Shilov space of Roumieu type related to $\mathcal{M}_\sigma$ is given by
\begin{align*}
 \mathcal S_{\{\mathcal{M}_\sigma\}}^{\{\mathcal{M}_\sigma\}} (\mathbb R)= \{\varphi\in C^{\infty}(\mathbb R)\colon  (\exists \tau>0)\;(\exists C>0)&\;  (\forall \alpha,\beta\in\mathbb N_0) \\
  &\sup_{x\in\mathbb R}|x^\alpha \varphi^{(\beta)} (x)|\leq C M^{\tau,\sigma}_{\alpha} 
   M^{\tau,\sigma}_{\beta}\},
\end{align*}
and the extended Gelfand-Shilov space of Beurling type related to $\mathcal{M}_\sigma$ is given by
\begin{align*}
 \mathcal S_{(\mathcal{M}_\sigma)}^{(\mathcal{M}_\sigma)} (\mathbb R)= \{\varphi\in C^{\infty}(\mathbb R)\colon(\forall \tau>0)\;(\exists C>0)&\;  (\forall \alpha,\beta\in\mathbb N_0) \\
  &\sup_{x\in\mathbb R}|x^\alpha  \varphi^{(\beta)} (x)|\leq C M^{\tau,\sigma}_{\alpha} 
   M^{\tau,\sigma}_{\beta}\}.
\end{align*}
\end{definition}
We use $\mathcal S_{\mathcal{M}_\sigma}^{\mathcal{M}_\sigma} (\mathbb R)$ as a common notation for
$\mathcal S_{\{\mathcal{M}_\sigma\}}^{\{\mathcal{M}_\sigma\}} (\mathbb R)$
or $ \mathcal S_{(\mathcal{M}_\sigma)}^{(\mathcal{M}_\sigma)} (\mathbb R)$.

Topologically, these spaces are inductive (respectively, projective) limits of Banach spaces. More precisely, the extended Gelfand-Shilov space of Roumieu type is a $(DF)$-space, whereas its Beurling counterpart is an $(F)$-space. Moreover, in \cite[Theorem~3]{TTZ2025}, the authors additionally establish the nuclearity of these spaces. 

The associated function of the sequence $M^{\tau,\sigma}_n$, $n\in \mathbb{N}_0$, is defined by
\begin{equation*}
    \label{eq:associated-function}
T_{\tau,\sigma}(x)=\sup_{n\in \mathbb{N}_0}\ln \frac{x^n}{M^{\tau, \sigma}_n}, \qquad
x\geq 0. 
\end{equation*}
We assume, by convention that $0^0=1$ so that $T_{\tau,\sigma}(0)=0.$
Observe that, for every $x\in(0,1]$ and every $n\in \mathbb{N}_0$ we have $\ln \frac{x^n}{M^{\tau, \sigma}_n}\leq 0$ with equality when $n=0$. Therefore, $T_{\tau,\sigma}(x)=0$ for $x\in [0,1]$. 

For the reader's convenience, we recall an important theorem providing equivalent characterizations of Gelfand--Shilov type spaces in terms of the growth and decay properties of a function and its Fourier transform, as well as the decay described by the associated function, in a form analogous to the corresponding result in \cite{Chung 1996}. Recall that the Fourier transform of $\varphi \in \mathcal S_{\mathcal{M}_\sigma}^{\mathcal{M}_\sigma} (\mathbb R)$  is given by
\begin{align*}
    (\mathcal{F}\varphi)(\xi)=\widehat \varphi(\xi)=\int_{\mathbb R}\varphi(x)e^{-2\pi ix\xi}\;dx,\quad \xi\in \mathbb R.
\end{align*}

\begin{theorem}\label{conditions}\cite[Theorem~5]{TTZ2025}
Let $\sigma>1$ and the weight matrix $\mathcal{M}_\sigma$ be given by 
\eqref{eq:the-weight-matrix}.
If $ \varphi\in  C^\infty (\mathbb R) $, then
the following conditions are equivalent:
    \begin{enumerate}
        \item $\varphi\in \mathcal  S_{\{\mathcal{M}_\sigma\}}^{\{\mathcal{M}_\sigma\}} (\mathbb R);$
        \item $(\exists \tau>0)(\exists C>0)\;(\forall \alpha,\beta\in \mathbb N_0)$
        $$\sup_{x\in\mathbb R}|x^\alpha \varphi(x)|\leq C M^{\tau,\sigma}_{\alpha} \quad \text{and}\quad \sup_{x\in\mathbb R}|\varphi^{(\beta)}(x)|\leq CM^{\tau,\sigma}_{\beta};$$
        \item $(\exists \tau>0)(\exists C>0)\;(\forall \alpha,\beta\in \mathbb N_0)$
        $$\sup_{x\in\mathbb R}|x^\alpha \varphi(x)|\leq C M^{\tau,\sigma}_{\alpha} \quad \text{and}\quad \sup_{\xi\in\mathbb R}|\xi^\beta \widehat\varphi(\xi)|\leq CM^{\tau,\sigma}_{\beta};$$
        \item $(\exists \tau>0)(\exists C>0)\;(\forall \alpha,\beta\in \mathbb N_0)$
        $$\sup_{\xi\in\mathbb R}|\widehat\varphi^{(\alpha)}(\xi)|\leq C M^{\tau,\sigma}_{\alpha} \quad \text{and}\quad \sup_{\xi\in\mathbb R}|\xi^\beta \widehat\varphi(\xi)|\leq CM^{\tau,\sigma}_{\beta};$$
        \item $(\exists \tau>0)$
        $$\sup_{x\in\mathbb R} |\varphi(x)|e^{T_{\tau,\sigma}(|x|)}<\infty \quad \text{and}\quad \sup_{\xi\in\mathbb R} |\widehat{\varphi}(\xi)|e^{T_{\tau,\sigma}(|\xi|)}<\infty.$$
    \end{enumerate}
\end{theorem}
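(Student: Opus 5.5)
The plan is to prove the equivalences by a cycle of implications, in the standard Chung--Chung--Kim style. The key tools are the Fourier transform, the moment estimates $\widetilde{(M.2)}$, $\widetilde{(M.2)'}$ from Lemma~\ref{osobineM_p_s}, and Lemma~\ref{lm1} (through \eqref{jede h}), which lets us absorb factors $h^{n^\sigma}$ at the price of doubling $\tau$. We prove $1\Rightarrow 2\Rightarrow 3\Rightarrow 4\Rightarrow 1$ and, separately, $3\Leftrightarrow 5$.

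\emph{The cycle.} The implication $1\Rightarrow2$ is immediate by taking $\beta=0$, respectively $\alpha=0$.

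For $2\Rightarrow3$, we need to bound $\xi^\beta\widehat\varphi(\xi)$. We have $|(2\pi\xi)^\beta\widehat\varphi(\xi)|\le \int|\varphi^{(\beta)}|\,dx$. To make this integrable, we interpolate between the two estimates in 2: write $|\varphi^{(\beta)}(x)|(1+x^2)\le |\varphi^{(\beta)}|+|x^2\varphi^{(\beta)}|$. The mixed term $x^2\varphi^{(\beta)}$ is controlled by a Landau--Kolmogorov type argument (or the Cartan--Gorny inequality, Lemma~\ref{cglemma}, applied on unit intervals, whose constants depend only on the length of the interval, as noted in the introduction). This yields $|x^2\varphi^{(\beta)}(x)|\le C h^{\beta^\sigma}M^{\tau,\sigma}_{\beta}$ after using $\widetilde{(M.2)'}$. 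The $(2\pi)^{-\beta}$ and $h^{\beta^\sigma}$ factors are then absorbed via \eqref{jede h}.

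The implication $3\Rightarrow4$ is symmetric: $\widehat\varphi^{(\alpha)}=\mathcal F((-2\pi i x)^\alpha\varphi)$, and $|x^\alpha\varphi|$ is integrable after multiplying by $(1+x^2)$. Moreover, $M_{\alpha+2}^{\tau,\sigma}\le C^{\alpha^\sigma}M^{\tau,\sigma}_\alpha$ by iterating $\widetilde{(M.2)'}$, with the geometric factors again absorbed by \eqref{jede h}.

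Finally, $4\Rightarrow1$ follows by applying $2\Rightarrow3$ (which is the same argument) to $\widehat\varphi$ and then estimating
\[
|x^\alpha\varphi^{(\beta)}(x)|\le \int |\partial_\xi^\alpha(\xi^\beta\widehat\varphi(\xi))|\,d\xi .
\]
We expand by Leibniz. Each term $\xi^{\beta-j}\widehat\varphi^{(\alpha-j)}$ requires the mixed estimate, which we obtain from the two separate estimates by an interpolation of the form $|\xi^{p}\widehat\varphi^{(q)}|\lesssim (\sup|\xi^{2p}\widehat\varphi|)^{1/2}(\sup|\widehat\varphi^{(2q)}|)^{1/2}$, using integration by parts (or the Cartan--Gorny inequality, locally). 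Then $\widetilde{(M.2)}$ splits $M_{2p}\le C^{p^\sigma}(M_p^{\tau2^{\sigma-1}})^2$, so the square root gives $M_p^{\tau 2^{\sigma-1}}$. The binomial sum $\sum\binom{\alpha}{j}\binom{\beta}{j}j!\le 4^{\alpha+\beta}\max(\alpha,\beta)^{\dots}$ is absorbed by \eqref{jede h} and $M_j\le M_{\alpha}$ monotonicity.

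\emph{Associated function.} For $3\Leftrightarrow5$, note that $\sup_\alpha |x|^\alpha/M_\alpha^{\tau,\sigma}=e^{T_{\tau,\sigma}(|x|)}$. Hence $|x^\alpha\varphi|\le CM_\alpha$ for all $\alpha$ is exactly $|\varphi(x)|e^{T_{\tau,\sigma}(|x|)}\le C$, and likewise for $\widehat\varphi$.

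The main obstacle is the mixed-term interpolation in $4\Rightarrow1$ (and the analogous step in $2\Rightarrow3$). There, the lack of $(M.2)$ forces a loss $\tau\mapsto \tau2^{\sigma-1}$ plus supergeometric factors $C^{n^\sigma}$. I will handle these uniformly by appealing to $\widetilde{(M.2)}$ and then to \eqref{jede h} (equivalently Theorem~\ref{thm:with-or-without-h}), accepting a final $\tau$ that is a fixed multiple of the original.
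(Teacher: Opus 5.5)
The paper gives no proof of this theorem. It is recalled from \cite[Theorem~5]{TTZ2025} as the extended-Gevrey analogue of the Chung--Chung--Kim characterization, so there is no in-paper argument to compare against.

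Judged on its own, your outline is sound: the cycle $1\Rightarrow2\Rightarrow3\Rightarrow4\Rightarrow1$ plus $3\Leftrightarrow5$. In particular, $3\Leftrightarrow5$ is exactly the identity $e^{T_{\tau,\sigma}(|x|)}=\sup_\alpha |x|^\alpha/M^{\tau,\sigma}_\alpha$, with the same $\tau$ and with $C$ equal to the finite supremum.

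Where you genuinely depart from the usual route is the passage from separate to mixed estimates. The standard argument works in $L^2$, where one integration by parts gives
\[
\|x^\alpha\varphi^{(\beta)}\|_2^2\le\sum_{j}\binom{\beta}{j}\frac{(2\alpha)!}{(2\alpha-j)!}\,\|x^{2\alpha-j}\varphi\|_2\,\|\varphi^{(2\beta-j)}\|_2 .
\]
This proves $2\Rightarrow1$ directly on the $x$-side. The Fourier conditions then follow from the elementary $L^1$ bounds $|\xi^\beta\widehat\varphi|\lesssim\|\varphi^{(\beta)}\|_1$ and $|\varphi^{(\beta)}|\lesssim\|\xi^\beta\widehat\varphi\|_1$, at the cost of moving back and forth between $L^2$ and $L^\infty$.

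You instead stay in sup norms and apply Lemma~\ref{cglemma} on unit intervals, doing the interpolation on the Fourier side. This is the same device, with the same observation that the constants depend only on the interval length, that the paper uses for Theorem~\ref{12ekv}. It keeps all constants explicit, in the quantitative spirit of the paper. The price is Cartan--Gorny's two cases: with $l=q$, $m=2q$ you get $\max\{(2q)!\,4^q\sup|\widehat\varphi|,\ \sup|\widehat\varphi^{(2q)}|\}$ rather than $\sup|\widehat\varphi^{(2q)}|$. The extra term is at most $e^{cq^\sigma}$, so \eqref{jede h} absorbs it. Both routes lose a factor of the form $\tau\mapsto c\,2^{\sigma-1}\tau$ through $\widetilde{(M.2)}$ and \eqref{jede h}.

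Two points to tidy.

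In $2\Rightarrow3$, the orders must match the property you invoke. To get $|x^2\varphi^{(\beta)}|\le Ch^{\beta^\sigma}M^{\tau,\sigma}_\beta$ via $\widetilde{(M.2)'}$, take $l=\beta$, $m=\beta+1$, and use the decay bound of order $2\beta+2$ for $G_0$ on $[k,k+1]$. Then $G_0^{1/(\beta+1)}$ produces the needed $k^{-2}$, and the resulting factor $(M^{\tau,\sigma}_{2\beta+2})^{1/(\beta+1)}$ is at most $e^{c\beta^\sigma}$, hence harmless. Alternatively, take $m=2\beta$ with decay order $4$, but then you need $\widetilde{(M.2)}$ instead.

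In $4\Rightarrow1$, applying $2\Rightarrow3$ to $\widehat\varphi$ is unnecessary. The Leibniz/interpolation estimate uses only the two bounds on $\widehat\varphi$ in condition 4, together with $\widetilde{(M.2)'}$ for the extra factor $(1+\xi^2)$ needed for integrability.
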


The Beurling counterpart of this theorem also holds (see \cite[Theorem~6]{TTZ2025}), where each occurrence of $(\exists \tau>0)$ in the previous theorem is replaced by $(\forall \tau>0)$.

\subsection{Interpolation problem in extended Gelfand-Shilov spaces}

In this section, we extend the results established in Section \ref{sec:3} for extended Gevrey spaces to the setting of extended Gelfand-Shilov spaces. More precisely, we show that estimates involving powers and derivatives at the orders of a suitably chosen increasing sequence imply the corresponding estimates for all orders.

Additionally, we establish a result concerning the associated function, which is of independent interest. More precisely, we show that it is sufficient to take the supremum over a suitably chosen subsequence in the definition of the associated function in order to obtain estimates equivalent to those given by its standard definition.

Let 
 $(\alpha_{n})_{n\in \mathbb{N}_{0}}$ and $(\beta_{m})_{m\in \mathbb{N}_{0}}$ be increasing  sequences of non-negative integers such that 
 \begin{gather}
\lim_{n\to\infty} \alpha_n=\lim_{n\to\infty} \beta_n=\infty,\quad
\alpha_0=\beta_0=0, \notag\\
(\exists L>1)\quad  \dfrac{\alpha_{n+1}}{\alpha_n}\le L,\qquad
\dfrac{\beta_{n+1}}{\beta_n}\le L,\quad n\in\mathbb{N}.
\label{firstcondition}
\end{gather}

\begin{theorem}\label{12ekv}
		Let $\sigma >1$ be given,  $(\alpha_{n})_{n\in \mathbb{N}_{0}}$ and $(\beta_{m})_{m\in \mathbb{N}_{0}}$ be increasing  sequences of non-negative integers for which \eqref{firstcondition} holds. 
    Suppose that  $\varphi \in C^{\infty}(\mathbb R)$ satisfy the following estimate: 
 \begin{equation}\label{conditionsup}
	(\exists \tau_0 >0)(\exists C_0>0)\quad \sup_{x\in \mathbb R}|x^{\alpha_{n}}\varphi^{(\beta_{m})} (x)|\leq C_0M_{\alpha_{n}}^{\tau_0 , \sigma}M_{\beta_{m}}^{\tau_0 , \sigma},\quad m,n\in \mathbb{N}_{0}.
\end{equation}
Then, $\varphi \in \mathcal S_{\{M_{\sigma}\}}^{\{M_{\sigma}\}}(\mathbb R).$
	\end{theorem}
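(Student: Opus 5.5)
Our plan is to reduce Theorem~\ref{12ekv} to the characterization in Theorem~\ref{conditions}, namely condition 2. It suffices to find $\tau>0$ and $C>0$ such that
\[
\sup_{x\in\mathbb R}|x^\alpha\varphi(x)|\le CM^{\tau,\sigma}_\alpha,\qquad \sup_{x\in\mathbb R}|\varphi^{(\beta)}(x)|\le CM^{\tau,\sigma}_\beta,\qquad \alpha,\beta\in\mathbb N_0 .
\]
Taking $n=0$ (so $\alpha_0=0$) in \eqref{conditionsup} gives
\[
\sup_x|\varphi^{(\beta_m)}(x)|\le C_0M^{\tau_0,\sigma}_{\beta_m}, \qquad m\in\mathbb N_0.
\]
Taking $m=0$ gives
\[
\sup_x|x^{\alpha_n}\varphi(x)|\le C_0M^{\tau_0,\sigma}_{\alpha_n}, \qquad n\in\mathbb N_0.
\]
So the two families are decoupled, and each must be interpolated separately.

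\textbf{Derivatives.} We will apply Theorem~\ref{thm: main} on every interval $[a,a+1]$, $a\in\mathbb R$, to the sequence $(\beta_m)_{m\ge1}$, dropping $\beta_0=0$ because Theorem~\ref{thm: main} requires positive integers. Orders $k<\beta_1$ are not covered by Theorem~\ref{thm: main} directly. We handle them by applying Lemma~\ref{cglemma} with $G_0=\sup|\varphi|$ and $G_m=F_{\beta_1}$, or by prepending the index $0$ via the same Cartan--Gorny argument. In either case finitely many orders only cost a constant.

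The key point, already stressed in the introduction, is that the constants $h_1,h_2,h_3$ and $A$ in the proof of Theorem~\ref{thm: main} depend only on $L$, $\tau_0$, $\sigma$ and the length $b-a$, not on the location of the interval. Hence the resulting bound $F_k\le CM_k^{\tau,\sigma}$ on $[a,a+1]$, with $\tau=2\tau_0L^\sigma$ and $C=2C_0A$, is uniform in $a$. Taking the supremum over $a$ gives the global derivative estimate.

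\textbf{Powers.} No Cartan--Gorny inequality is needed here; a direct log-convex interpolation suffices. For $|x|\le1$ we have $|x^\alpha\varphi(x)|\le C_0$. For $|x|\ge1$ and $\alpha_n<\alpha<\alpha_{n+1}$, write $\alpha=\theta\alpha_n+(1-\theta)\alpha_{n+1}$. Then
\[
|x|^\alpha|\varphi(x)|=\bigl(|x|^{\alpha_n}|\varphi(x)|\bigr)^\theta\bigl(|x|^{\alpha_{n+1}}|\varphi(x)|\bigr)^{1-\theta}\le C_0\bigl(M^{\tau_0,\sigma}_{\alpha_n}\bigr)^\theta\bigl(M^{\tau_0,\sigma}_{\alpha_{n+1}}\bigr)^{1-\theta}.
\]
Exactly as in \eqref{dndn+1}, this is at most $C_0(L\alpha)^{\tau_0(L\alpha)^\sigma}=C_0h_3^{\alpha^\sigma}M^{\tau_0L^\sigma,\sigma}_\alpha$, using $\alpha_{n+1}\le L\alpha_n<L\alpha$; this step needs $n\ge1$. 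The range $0<\alpha<\alpha_1$ is finite and is absorbed into the constant. Finally, \eqref{jede h} removes the factor $h_3^{\alpha^\sigma}$ at the cost of doubling $\tau$.

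\textbf{Conclusion and main obstacle.} Take $\tau$ to be the larger of the two exponents obtained and $C$ the larger of the two constants; monotonicity in $\tau$ is given by Lemma~\ref{tau12}. The implication $2\Rightarrow1$ of Theorem~\ref{conditions} then yields $\varphi\in\mathcal S^{\{\mathcal M_\sigma\}}_{\{\mathcal M_\sigma\}}(\mathbb R)$. The main obstacle we expect is verifying carefully that every constant in the proof of Theorem~\ref{thm: main} is translation independent. The factor $(2/(b-a))^{m}$ in Lemma~\ref{cglemma} depends only on the length, and $\sup_{[a,a+1]}|\varphi^{(\beta_m)}|$ is bounded by the global supremum. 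We also need to treat the initial index $\beta_0=0$, which is not covered by the positivity assumption of Theorem~\ref{thm: main}.
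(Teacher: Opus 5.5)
Your proposal is correct and follows essentially the same route as the paper: reduce to condition 2 of Theorem~\ref{conditions}, get the derivative bounds by applying Theorem~\ref{thm: main} on unit-length intervals (constants depend only on the length), and get the power bounds by direct log-convex interpolation combined with \eqref{dndn+1} and \eqref{jede h}. Your explicit handling of the initial indices fills in details the paper's proof passes over silently: $\beta_0=0$ is not formally allowed in Theorem~\ref{thm: main}, and the orders $k<\beta_1$ and $0<\alpha<\alpha_1$ must be covered separately.
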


\begin{proof}
		Let $\varphi \in C^{\infty}(\mathbb R)$. We show that the conditions in part 2. of Theorem \ref{conditions} hold. 

First we prove the estimate for the derivatives of $\varphi$. By letting $n=0$ in (\ref{conditionsup})  we obtain \begin{equation}\label{eq:k}
	\sup_{x\in [k,k+1]}|\varphi ^{(\beta_{m})}(x)|\leq C_0M_{\beta_{m}}^{\tau_0 ,\sigma}, \quad m\in \mathbb{N}_{0},
\end{equation}
for every $k\in\mathbb Z$.

Fix $k\in\mathbb Z$. Since the sequence $(\beta_{m})$ satisfies (\ref{firstcondition}), 
Theorem \ref{thm: main} applied to the restriction of the  function $\varphi$  to the interval $[k,k+1]$ yields the existence of constants $C',\tau ' >0$ independent of $k$ such that \begin{equation}\label{kk+1}
	\sup_{x\in [k,k+1]}|\varphi^{(\beta)}(x)|\leq C'M_{\beta}^{\tau', \sigma}, \quad \beta \in \mathbb{N}_{0}.
\end{equation}
A careful inspection of the proof of Theorem \ref{thm: main} shows that the constants $C$ and $\tau$ appearing in \eqref{C'h'} depend solely on the length $b-a$ of the interval, rather than on its endpoints. 
Since all intervals $[k,k+1]$ appearing in \eqref{eq:k} have the same length, the constants $C'$ and $\tau'$ in \eqref{kk+1} may be chosen independently of $k$, for all $k\in\mathbb Z$. Thus we yield 
\begin{equation}\label{eq 1}
\sup_{x\in \mathbb R}|\varphi^{(\beta)}(x)|\leq C'M_{\beta}^{\tau', \sigma}, \quad \beta \in \mathbb{N}_{0}.
\end{equation}

It remains to prove that for every   $\alpha\in\mathbb N_0$ we have $\displaystyle \sup_{x\in\mathbb R} |x^{\alpha}\varphi(x)|\leq C''  M^{\tau'',\sigma}_{\alpha}$ for some $C'',\tau''>0.$ 

Fix $\alpha\in \mathbb N$. Let $n\in\mathbb N$ satisfies $\alpha_n\leq \alpha< \alpha_{n+1}$. If 
$\alpha=\alpha_n$ then the estimate \eqref{conditionsup} is enough to conclude the proof. Suppose now $\alpha\in(\alpha_n,\alpha_{n+1})$. As in the proof of Theorem \ref{thm: main}, set $\theta=\frac{\alpha_{n+1}-\alpha}{\alpha_{n+1}-\alpha_n}$. Then it is easy to see that 
$\theta\in (0,1)$ and
$\alpha=\theta \alpha_n+(1-\theta)\alpha_{n+1}$. Fix $x\in \mathbb R$. 

By raising the inequality $|x^{\alpha_n}\varphi(x)|\leq  C_0 M^{\tau_0,\sigma}_{\alpha_n}$ to the power $\theta$ and  inequality $|x^{\alpha_{n+1}}\varphi(x)|\leq  C_0 M^{\tau_0,\sigma}_{\alpha_{n+1}}$ to the power $1-\theta$ and then multiplying them, we obtain 
$$|x^{\theta \alpha_n+(1-\theta)\alpha_{n+1}}\varphi(x)|\leq C_0 \alpha_n^{\theta\tau_0 \alpha_n^{\sigma} }\cdot  \alpha_{n+1}^{(1-\theta)\tau_0 \alpha_{n+1}^{\sigma} } .$$
The estimate \eqref{dndn+1} applied to the sequence $(\alpha_n)_{n\in\mathbb N_0}$ yields 
 $$ \alpha_n^{\theta\tau_0 \alpha_n^{\sigma} }\cdot  \alpha_{n+1}^{(1-\theta)\tau_0 \alpha_{n+1}^{\sigma} }\leq h^{\alpha^{\sigma}} \alpha^{\tau_0 L^{\sigma} \alpha^{\sigma}}=h^{\alpha^{\sigma}} M^{\tau_0 L^{\sigma},\sigma}_\alpha,$$ where $h=L^{\tau_0 L^{\sigma}}.$  Finally, from \eqref{jede h} we have 
 $$|x^{\alpha}\varphi(x)|\leq C_0 \alpha_n^{\theta\tau_0 \alpha_n^{\sigma} }\cdot  \alpha_{n+1}^{(1-\theta)\tau_0 \alpha_{n+1}^{\sigma} }\leq C_0 h^{\alpha^{\sigma}} M^{\tau_0 L^{\sigma},\sigma}_\alpha\leq C'' M^{\tau'',\sigma}_\alpha,$$ where 
 $\tau''=2\tau_0 L^{\sigma}$ and $C''=C_0A>0$, with $A$ denoting the constant from \eqref{jede h}. Since $x\in\mathbb R$ was chosen arbitrary, we have also 
 \begin{equation}
     \label{eq 2}
     \sup_{x\in\mathbb R} |x^\alpha \varphi(x)|\leq  C'' M^{\tau'',\sigma}_\alpha, \quad \alpha \in \mathbb{N}_{0}.
 \end{equation}
 
Now, taking $\tau=\max\{\tau',\tau''\}$ and $C=\max\{C',C''\}$, the proof follows from \eqref{eq 1}, \eqref{eq 2}, and Theorem \ref{conditions} part $1.\Leftrightarrow 2.$
	\end{proof}

Since we have explained how the constants $\tau'$ and $\tau''$ arise, it is straightforward to verify that the corresponding Beurling counterpart also holds.

\begin{theorem}
		Let $\sigma >1$ be given,  $(\alpha_{n})_{n\in \mathbb{N}_{0}}$ and $(\beta_{m})_{m\in \mathbb{N}_{0}}$ be increasing  sequences of non-negative integers for which \eqref{firstcondition} holds. 
    Suppose that  $\varphi \in C^{\infty}(\mathbb R)$ satisfy the following estimate:  
 \begin{equation*}\label{conditionsupbeurling}
	(\forall \tau_0 >0)(\exists C_0>0)\quad \sup_{x\in \mathbb R}|x^{\alpha_{n}}\varphi^{(\beta_{m})} (x)|\leq C_0 M_{\alpha_{n}}^{\tau_0 , \sigma}M_{\beta_{m}}^{\tau_0 , \sigma},\quad m,n\in \mathbb{N}_{0}.
\end{equation*}
Then, $\varphi \in \mathcal S_{(M_{\sigma})}^{(M_{\sigma})}(\mathbb R).$
	\end{theorem}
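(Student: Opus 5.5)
The proof will mirror that of Theorem \ref{12ekv}, keeping track of how the constants depend on $\tau_0$. By Theorem \ref{conditions} (part $1.\Leftrightarrow 2.$, in its Beurling version from \cite[Theorem~6]{TTZ2025}), it suffices to show the following. For every $\tau>0$ there is $C>0$ such that
$$\sup_{x\in\mathbb R}|\varphi^{(\beta)}(x)|\le C M^{\tau,\sigma}_\beta,\qquad \sup_{x\in\mathbb R}|x^\alpha\varphi(x)|\le CM^{\tau,\sigma}_\alpha,\qquad \alpha,\beta\in\mathbb N_0.$$
So fix an arbitrary target $\tau>0$ and apply the hypothesis with $\tau_0:=\tau/(2L^\sigma)$. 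This yields a constant $C_0=C_0(\tau_0)$ for which \eqref{conditionsup} holds with this $\tau_0$.

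\textbf{Derivatives.} Take $n=0$ (so $\alpha_0=0$) to obtain \eqref{eq:k} on every interval $[k,k+1]$, $k\in\mathbb Z$. On each such interval I run the proof of Theorem \ref{thm: main}, including both cases of the Cartan--Gorny alternative. This gives \eqref{C'h'} with $C'=2C_0$, $\tau'=\tau_0L^\sigma$, and $h'=\max\{h_1h_2,h_1h_3\}$. Here $h_1$ depends only on $L$, $h_2$ only on $L$, $\sigma$ and $b-a=1$, and $h_3=L^{\tau_0L^\sigma}$. None of these depends on $k$. Then \eqref{jede h}, applied with $h=h'$ and with $2\tau'=\tau$ in place of $\tau$, gives
$$\sup_{x\in\mathbb R}|\varphi^{(\beta)}(x)|\le 2C_0A\,M^{\tau,\sigma}_\beta,$$
where $A=A(\sigma,\tau,h')$ is still independent of $k$.

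\textbf{Powers.} I repeat the convexity argument from the proof of Theorem \ref{12ekv}. For $\alpha_n<\alpha<\alpha_{n+1}$, combining the estimates at $\alpha_n$ and $\alpha_{n+1}$ with exponents $\theta$ and $1-\theta$, and then using \eqref{dndn+1}, gives
$$|x^\alpha\varphi(x)|\le C_0h^{\alpha^\sigma}M^{\tau_0L^\sigma,\sigma}_\alpha,\qquad h=L^{\tau_0L^\sigma}.$$
Applying \eqref{jede h} once more turns this into $C_0A\,M^{\tau,\sigma}_\alpha$. Taking $C=2C_0A$ handles both families of estimates.

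The only delicate point is the order of quantifiers. The target $\tau$ must be fixed first, and then $\tau_0$ chosen as the specific value $\tau/(2L^\sigma)$. This works because the map $\tau_0\mapsto\tau=2\tau_0L^\sigma$ from Theorem \ref{thm: main} is explicit and onto $(0,\infty)$, with $L$ independent of $\tau_0$. One must also check that $h'$, and hence $A$, depend only on $\tau_0$, $L$, $\sigma$ and the interval length, and not on $k$. This holds by the same inspection already invoked in the proof of Theorem \ref{12ekv}, so no genuine obstacle remains.
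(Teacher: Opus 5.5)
Your proposal is correct and follows the route the paper intends. The paper omits this proof, saying only that it follows by tracking how $\tau'$ and $\tau''$ arise in the proof of Theorem~\ref{12ekv}; your choice $\tau_0=\tau/(2L^\sigma)$, together with the check that $h'$ and $A$ depend only on $\tau_0$, $L$, $\sigma$ and the interval length (not on the location), is exactly that bookkeeping.

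One small point, which the paper also glosses over: since $\alpha_0=\beta_0=0$, condition \eqref{firstcondition} gives no ratio control for the finitely many orders strictly below $\alpha_1$ and $\beta_1$. For derivatives of order $0<\beta<\beta_1$, use one Cartan--Gorny step between orders $0$ and $\beta_1$ on unit intervals. For powers $0<\alpha<\alpha_1$, use $|x|^{\alpha}|\varphi(x)|\le\max\{|\varphi(x)|,|x|^{\alpha_1}|\varphi(x)|\}$. This only affects the constant.
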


In what follows, we will show that a corresponding statement can also be established for associated functions. We begin with the following technical lemma.

For $\sigma>1$, $\tau>0$, and an arbitrary sequence $(d_n)_{n\in\mathbb N_0}$ satisfying
$\lim_{n\to\infty}d_n=\infty$ and $d_0=0$, define
$$T_{\tau,\sigma,(d_n)}(x)=\sup_{n\in\mathbb N_0} \ln \frac{x^{d_n}}{M^{\tau,\sigma}_{d_n}},\quad x\geq 0.$$
As in the case of the associated function, we have $T_{\tau,\sigma,(d_n)}(x)=0$ for $x\in [0,1].$ 

\begin{lemma}\label{15ekv}
Let $\sigma>1$, $\tau>0$, and let $(d_n)_{n\in\mathbb{N}_0}$ be an increasing divergent sequence of positive integers satisfying \eqref{gap}. Then, for every $\tau>0$, there exist $\tau_1,C_1>0$ such that
\begin{equation}\label{associated_gap}
T_{\tau,\sigma,(d_n)}(x)\leq T_{\tau,\sigma}(x)\leq  T_{\tau_1,\sigma,(d_n)}(x)+C_1,\quad x\geq 0,
\end{equation}
where the constant $C_1$ depends on $\sigma$, $\tau$ and $L$.
\end{lemma}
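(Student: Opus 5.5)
The left inequality in \eqref{associated_gap} is immediate: the supremum defining $T_{\tau,\sigma,(d_n)}(x)$ runs over the subset $\{d_n\}$ of $\mathbb N_0$, so it cannot exceed $T_{\tau,\sigma}(x)$. For $x\in[0,1]$ both sides of the right inequality vanish, so it suffices to treat $x>1$ and to bound each term $\ln\frac{x^k}{M^{\tau,\sigma}_k}$, $k\in\mathbb N_0$, by $T_{\tau_1,\sigma,(d_n)}(x)+C_1$ uniformly in $k$. We may also adjoin $d=0$ to the sequence if needed, since this does not change the supremum for $x\ge1$ (the term with index $0$ equals $0\le$ any term bounded by the sup). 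This covers small $k<d_0$ as well, via the same convexity argument with $d=0$ as lower node.

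The plan is to mimic the convexity step used in the proof of Theorem \ref{12ekv}. Fix $k$ with $d_n<k<d_{n+1}$ (the case $k=d_n$ being trivial), and write $k=\theta d_n+(1-\theta)d_{n+1}$ with $\theta\in(0,1)$ as in the proof of Theorem \ref{thm: main}. Then $x^k=(x^{d_n})^\theta(x^{d_{n+1}})^{1-\theta}$. The key monotonicity input goes the opposite way from Theorem \ref{12ekv}: we need a \emph{lower} bound on $M^{\tau,\sigma}_k$ in terms of $M^{\tau_1,\sigma}_{d_n}$ and $M^{\tau_1,\sigma}_{d_{n+1}}$. Since $d_{n+1}\le Ld_n<Lk$ and $d_n<k$, we have
\begin{equation*}
(M^{\tau_1,\sigma}_{d_n})^{\theta}(M^{\tau_1,\sigma}_{d_{n+1}})^{1-\theta}\le M^{\tau_1,\sigma}_{d_{n+1}}\le (Lk)^{\tau_1 L^\sigma k^\sigma}=L^{\tau_1L^\sigma k^\sigma}M^{\tau_1L^\sigma,\sigma}_k ,
\end{equation*}
exactly as in \eqref{dndn+1}. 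Choosing $\tau_1=\tau/(2L^\sigma)$ gives $M^{\tau/2,\sigma}_k$ on the right together with a supergeometric factor $h^{k^\sigma}$, $h=L^{\tau L^{-\sigma}\cdot L^\sigma/2}=L^{\tau/2}$.

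Then by \eqref{jede h} (with $\tau$ in place of $\tau$ there), $h^{k^\sigma}M^{\tau/2,\sigma}_k\le A M^{\tau,\sigma}_k$, hence
\begin{equation*}
\frac{x^k}{M^{\tau,\sigma}_k}\le A\Big(\frac{x^{d_n}}{M^{\tau_1,\sigma}_{d_n}}\Big)^{\theta}\Big(\frac{x^{d_{n+1}}}{M^{\tau_1,\sigma}_{d_{n+1}}}\Big)^{1-\theta}\le A\, e^{T_{\tau_1,\sigma,(d_n)}(x)}.
\end{equation*}
Taking logarithms and the supremum over $k$ yields the claim with $C_1=\ln A$, where $A$ depends only on $\sigma,\tau,h$, hence on $\sigma,\tau,L$.

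The only delicate point I expect is bookkeeping of parameters in the application of \eqref{jede h}: one must make sure the factor $h^{k^\sigma}$ produced by $d_{n+1}\le Lk$ is absorbed by halving the exponent, i.e. that the reduction to $\tau_1=\tau/(2L^\sigma)$ is consistent with the form $h^{n^\sigma}M^{\tau/2,\sigma}_n\le AM^{\tau,\sigma}_n$; this is guaranteed by Lemma \ref{lm1}, which gives $A=\exp\big(\frac{\tau}{2\sigma e}h^{2\sigma/\tau}\big)$ explicitly.
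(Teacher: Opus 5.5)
Your main step is correct and is essentially the paper's argument. You use the same splitting $k=\theta d_n+(1-\theta)d_{n+1}$, the same bound as in \eqref{dndn+1}, and arrive at the same $\tau_1=\tau/(2L^\sigma)$ and $C_1=\ln A$. The only difference is that you absorb $h^{k^\sigma}$ directly at the index $k$ via \eqref{jede h}. The paper instead distributes $h^{p^\sigma}$ over the two nodes using convexity of $t\mapsto t^\sigma$ and then applies \eqref{jede h} at $d_n$ and $d_{n+1}$. Your version is slightly shorter and gives the explicit value $C_1=\tau L^\sigma/(2\sigma e)$.

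One claim in your first paragraph is wrong, however. The indices $k$ below the first positive node cannot be handled ``via the same convexity argument with $d=0$ as lower node''. That argument needs $d_{n+1}\le Ld_n<Lk$, which on this interval would read $d_0\le L\cdot 0$.

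For these finitely many $k$ and $x>1$ you can simply use $x^k/M^{\tau,\sigma}_k\le x^{d_0}$. This gives $\ln\frac{x^k}{M^{\tau,\sigma}_k}\le T_{\tau_1,\sigma,(d_n)}(x)+\ln M^{\tau_1,\sigma}_{d_0}$, so $C_1$ then also depends on $d_0$. That dependence cannot be avoided. Take $d_0\ge 2$ and $\ln x=\frac{\tau_1}{2}d_0^{\sigma-1}\ln d_0$. Then every term with $d_n\ge d_0$ is negative, so $T_{\tau_1,\sigma,(d_n)}(x)=0$, while $T_{\tau,\sigma}(x)\ge \ln x$ from the index $1$. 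Since \eqref{gap} constrains only ratios, $d_0$ can be arbitrarily large for fixed $L$, and the required $C_1$ is unbounded.

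The paper's proof has the same blind spot. With $d_0=0$ built into the definition of $T_{\tau,\sigma,(d_n)}$, \eqref{gap} cannot hold at $n=0$, and the interval $(0,d_1)$ is never treated. So the assertion that $C_1$ depends only on $\sigma,\tau,L$ needs this caveat.

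Relatedly, your remark that adjoining $0$ does not change the supremum for $x\ge1$ is false when $d_0\ge 2$ and $x$ is close to $1$. This is harmless only because the paper's definition of $T_{\tau,\sigma,(d_n)}$ already includes the node $0$.
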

\begin{proof}
For $x\in[0,1]$, the inequality \eqref{associated_gap} trivially holds. Hence, it remains to consider the case $x>1$. The first inequality is immediate. We now turn to the proof of the second inequality.

Let $p\in\mathbb N_0$ be fixed, and choose $n\in\mathbb N_0$ such that $p\in[d_n,d_{n+1})$. When $p=d_n$, the estimate \eqref{pomocna} trivially holds.
Next, suppose that $p\in(d_n,d_{n+1})$. As in the proof of Theorem \ref{thm: main}, set $\theta=\frac{d_{n+1}-p}{d_{n+1}-d_n}$. Then $\theta\in(0,1)$ and
$p=\theta d_n+(1-\theta)d_{n+1}$. 

The estimate \eqref{dndn+1} applied to the sequence $(d_n)_{n\in\mathbb N_0}$ yields 
 $$ d_n^{\theta\tau d_n^{\sigma} }\cdot  d_{n+1}^{(1-\theta)\tau d_{n+1}^{\sigma} }\leq h^{p^{\sigma}} p^{\tau L^{\sigma} p^{\sigma}}=h^{p^{\sigma}} M^{\tau L^{\sigma},\sigma}_p,$$ 
 where $h=L^{\tau L^{\sigma}}>1.$  Since the function $t\mapsto t^{\sigma}$ is convex on $[0,\infty)$, we infer that
 $p^{\sigma}\leq \theta d^{\sigma}_n+(1-\theta)d^{\sigma}_{n+1} $. Thus, we obtain 
 $$\frac{x^p}{M^{\tau,\sigma}_p}\leq \frac{x^p h^{p^{\sigma}}}{d_n^{\theta\tau_2 d_n^{\sigma} }\cdot  d_{n+1}^{(1-\theta)\tau_2 d_{n+1}^{\sigma} }}\leq \left(\frac{x^{d_n} h^{d^{\sigma}_n}  }{d_n^{\tau_2 d^{\sigma}_n}}\right)^{\theta}\cdot\left(\frac{x^{d_{n+1}} h^{d^{\sigma}_{n+1}}  }{d_{n+1}^{\tau_2 d^{\sigma}_{n+1}}}\right)^{1-\theta},$$
 where $\tau_2=\frac{\tau}{L^{\sigma}}.$
 From \eqref{jede h}, we have $\frac{h^{n^{\sigma}}}{M^{\tau_2,\sigma}_n}\leq  \frac{A}{M^{\tau_2/2,\sigma}_n}$,  for some $A>0$ depending only on $\sigma$, $\tau$ and $L$, and for every $n\in\mathbb N_0$. Suppose, without loss of generality, that $A\geq 1$. Then 
$$\frac{x^p}{M^{\tau,\sigma}_p}\leq A \left( \frac{x^{d_n} }{M^{\tau_2/2,\sigma}_{d_n}}   \right)^{\theta}\cdot  \left(\frac{x^{d_{n+1}}}{M^{\tau_2/2,\sigma}_{d_{n+1}}}\right)^{1-\theta}$$ 
and 
\begin{equation}\label{pomocna}
\ln \frac{x^p}{M^{\tau,\sigma}_p}\leq \ln A+\theta T_{\tau_2/2,\sigma,(d_n)}(x)+(1-\theta) T_{\tau_2/2,\sigma,(d_n)}(x) \leq T_{\tau_2/2,\sigma,(d_n)}(x)+C_1 ,\end{equation}
where $C_1=\ln A.$

 Hence, \eqref{pomocna} holds for an arbitrary $p\in\mathbb N_0$. Therefore, the second inequality in \eqref{associated_gap} is valid for $\tau_1=\tau_2/2=\frac{\tau}{2L^\sigma}$, with a constant $C_1$ depending only on $\sigma$, $\tau$ and $L$.  
\end{proof}

Finally, we establish an interpolation theorem for extended Gelfand–Shilov spaces, valid for all equivalent characterizations of these spaces.

\begin{theorem}\label{conditions gap}
Let $\sigma>1$ and the weight matrix $\mathcal{M}_\sigma$ be given by 
\eqref{eq:the-weight-matrix}.
If $ \varphi\in  C^\infty (\mathbb R) $, then
the following conditions are equivalent:
    \begin{enumerate}
        \item $\varphi\in \mathcal  S_{\{\mathcal{M}_\sigma\}}^{\{\mathcal{M}_\sigma\}} (\mathbb R).$
        \item  There exists increasing sequences $(\alpha_{n})_{n\in \mathbb{N}_{0}}$ and $(\beta_{m})_{m\in \mathbb{N}_{0}}$  of non-negative integers  satisfying \eqref{firstcondition} and $\tau,C>0$ such that  for all $m,n\in\mathbb N_0$:
        $$\sup_{x\in\mathbb R}|x^{\alpha_n} \varphi(x)|\leq C M^{\tau,\sigma}_{\alpha_n} \quad \text{and}\quad \sup_{x\in\mathbb R}|\varphi^{(\beta_m)} (x)|\leq CM^{\tau,\sigma}_{\beta_m}.$$
        \item There exists increasing sequences $(\alpha_{n})_{n\in \mathbb{N}_{0}}$ and $(\beta_{m})_{m\in \mathbb{N}_{0}}$  of non-negative integers  satisfying \eqref{firstcondition} and $\tau,C>0$ such that  for all $m,n\in\mathbb N_0$:
        $$\sup_{x\in\mathbb R}|x^{\alpha_n} \varphi(x)|\leq C M^{\tau,\sigma}_{\alpha_n}\quad \text{and}\quad \sup_{\xi\in\mathbb R}|\xi^{\beta_m} \widehat\varphi(\xi)|\leq CM^{\tau,\sigma}_{\beta_m}.$$
        \item There exists increasing sequences $(\alpha_{n})_{n\in \mathbb{N}_{0}}$ and $(\beta_{m})_{m\in \mathbb{N}_{0}}$  of non-negative integers satisfying \eqref{firstcondition} and $\tau,C>0$ such that  for all $m,n\in\mathbb N_0$:
        $$\sup_{\xi \in\mathbb R}|\widehat\varphi^{(\alpha_n)} (\xi)|\leq C M^{\tau,\sigma}_{\alpha_n} \quad \text{and}\quad \sup_{\xi\in\mathbb R}|\xi^{\beta_m} \widehat\varphi(\xi)|\leq CM^{\tau,\sigma}_{\beta_m}.$$

        \item There exists increasing sequences $(\alpha_{n})_{n\in \mathbb{N}_{0}}$ and $(\beta_{m})_{m\in \mathbb{N}_{0}}$  of non-negative integers  satisfying \eqref{firstcondition} 
        such that for some $\tau>0$ we have $$\sup_{x\in \mathbb R}  |\varphi(x)| e^{T_{\tau,\sigma,(\alpha_n)}(|x|)}<\infty \quad \text{and}\quad  \sup_{\xi\in \mathbb R}  |\widehat\varphi(\xi)| e^{T_{\tau,\sigma,(\beta_n)}(|\xi|)}<\infty.   $$
    \end{enumerate}
\end{theorem}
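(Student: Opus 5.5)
The plan is to prove that each of 2.–5. is equivalent to the corresponding condition of Theorem~\ref{conditions}. Since Theorem~\ref{conditions} already shows that conditions 1.–5. there are mutually equivalent, this suffices.

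The implication from 1. to each of 2.–5. is immediate. If $\varphi\in\mathcal S_{\{\mathcal M_\sigma\}}^{\{\mathcal M_\sigma\}}(\mathbb R)$, then Theorem~\ref{conditions} gives conditions 2.–5. of that theorem for all orders. We take $\alpha_n=\beta_n=n$ (with $\alpha_0=\beta_0=0$), which satisfies \eqref{firstcondition} with $L=2$, and restrict the estimates to these orders. For 5., we note that for this choice $T_{\tau,\sigma,(\alpha_n)}=T_{\tau,\sigma}$.

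For the converse, we treat each condition separately.

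\emph{Condition 2.} Here the derivative estimate is already covered by the proof of Theorem~\ref{12ekv}. The $\beta_m$-estimates imply \eqref{eq:k} on every interval $[k,k+1]$. Theorem~\ref{thm: main}, whose constants depend only on the length of the interval, then gives \eqref{eq 1}. The $\alpha_n$-estimates give \eqref{eq 2} by the convexity interpolation through \eqref{dndn+1} and \eqref{jede h}. This yields condition 2. of Theorem~\ref{conditions}. One technical point is that Theorem~\ref{thm: main} is formulated for positive $d_n$ satisfying \eqref{gap} from $n=0$, while here $\beta_0=0$. We handle this by applying it to the tail $(\beta_m)_{m\ge1}$ and treating orders below $\beta_1$ separately via the Cartan--Gorny step between orders $0$ and $\beta_1$; this costs only a constant.

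\emph{Condition 4.} This is condition 2. applied to $\widehat\varphi$, with the roles of $\alpha$ and $\beta$ exchanged and the same two arguments used. The result is condition 4. of Theorem~\ref{conditions}.

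\emph{Condition 3.} Both estimates are pure weight estimates, so no derivatives appear. The pointwise interpolation used for \eqref{eq 2} works verbatim for $x^{\alpha}\varphi(x)$ and for $\xi^{\beta}\widehat\varphi(\xi)$. Raising the two neighbouring estimates to the powers $\theta$ and $1-\theta$ and using \eqref{dndn+1} and \eqref{jede h} gives the estimates for all orders, with $\tau''=2\tau L^\sigma$. This is condition 3. of Theorem~\ref{conditions}.

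\emph{Condition 5.} We invoke Lemma~\ref{15ekv}. Its second inequality, applied with $\tau$ replaced by $2\tau L^\sigma$ so that $\tau_1=\tau$, gives
\[
T_{2\tau L^\sigma,\sigma}(x)\le T_{\tau,\sigma,(\alpha_n)}(x)+C_1 .
\]
Hence $|\varphi(x)|e^{T_{2\tau L^\sigma,\sigma}(|x|)}\le e^{C_1}|\varphi(x)|e^{T_{\tau,\sigma,(\alpha_n)}(|x|)}$ is bounded, and the same holds for $\widehat\varphi$ with $(\beta_n)$. Replacing both $\tau$-values by their maximum, and using that $T_{\tau,\sigma}$ decreases in $\tau$, gives condition 5. of Theorem~\ref{conditions}.

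The main obstacle is the bookkeeping of index conventions. Lemma~\ref{15ekv} and Theorem~\ref{thm: main} assume sequences of positive integers satisfying \eqref{gap} for all $n\in\mathbb N_0$, whereas \eqref{firstcondition} allows $\alpha_0=0$ and imposes the ratio bound only for $n\ge1$. For the purely multiplicative conditions 3. and 5., and for the $x^\alpha$ part of 2., the interval $(0,\alpha_1)$ is handled by interpolating between $x^0$ and $x^{\alpha_1}$. On this interval $M_p^{\tau,\sigma}$ is bounded, so the two estimates, together with $\alpha\le\alpha_1$, give a bound by a constant times $M^{\tau,\sigma}_\alpha$. The derivative parts of 2. and 4. use the Cartan--Gorny remark above. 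Apart from this, the argument only reuses estimates already established in the paper.
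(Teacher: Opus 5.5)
Your proposal is correct and follows essentially the paper's proof. The forward implications come from Theorem~\ref{conditions} with the full sequence of orders. The converses for 2.\ and 3.\ come from the interpolation argument of Theorem~\ref{12ekv}, and 5.\ from Lemma~\ref{15ekv}. The only cosmetic difference is that for 4.\ you rerun the argument on $\widehat\varphi$, whereas the paper cites $\mathcal F\mathcal S^{\{\mathcal M_\sigma\}}_{\{\mathcal M_\sigma\}}(\mathbb R)=\mathcal S^{\{\mathcal M_\sigma\}}_{\{\mathcal M_\sigma\}}(\mathbb R)$.

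Your explicit handling of $\alpha_0=\beta_0=0$ and of the orders below $\alpha_1,\beta_1$, where \eqref{gap} fails, repairs an index issue that the paper passes over.
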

\begin{proof}
The implication $1.\Rightarrow 2.$ follows from the corresponding implication in Theorem \ref{conditions}, whereas the converse implication follows from Theorem \ref{12ekv}. 

The fact that  ${\mathcal F}{\mathcal S}_{\{\mathcal{M}_\sigma\}}^{\{\mathcal{M}_\sigma\}} (\mathbb R)={\mathcal S}_{\{\mathcal{M}_\sigma\}}^{\{\mathcal{M}_\sigma\}} (\mathbb R)$  (\cite[Theorem~4]{TTZ2025}) together with the  equivalence $1.\Leftrightarrow 2.$
prove  $1.\Leftrightarrow 4.$  

 Moreover, the equivalence $1.\Leftrightarrow 5.$ follows from Lemma~\ref{15ekv} and the corresponding equivalence in Theorem~\ref{conditions}.

Finally, $1.\Rightarrow 3.$ follows from Theorem \ref{conditions}, whereas $3.\Rightarrow 1.$ follows from the same arguments as in Theorem \ref{12ekv}, therefore the proof is omitted.
\end{proof}

\begin{theorem}\label{conditions gap Beurling}
Let $\sigma>1$ and the weight matrix $\mathcal{M}_\sigma$ be given by 
\eqref{eq:the-weight-matrix}.
If $ \varphi\in  C^\infty (\mathbb R) $, then
the following conditions are equivalent:
    \begin{enumerate}
        \item $\varphi\in \mathcal  S_{(\mathcal{M}_\sigma)}^{(\mathcal{M}_\sigma)} (\mathbb R).$
        \item  There exists increasing sequences $(\alpha_{n})_{n\in \mathbb{N}_{0}}$ and $(\beta_{m})_{m\in \mathbb{N}_{0}}$  of non-negative integers  satisfying \eqref{firstcondition} and for every $\tau>0$ there exists $C>0$ such that  for all $m,n\in\mathbb N_0$:
        $$\sup_{x\in\mathbb R}|x^{\alpha_n} \varphi(x)|\leq C M^{\tau,\sigma}_{\alpha_n} \quad \text{and}\quad \sup_{x\in\mathbb R}|\varphi^{(\beta_m)}(x)|\leq CM^{\tau,\sigma}_{\beta_m}.$$
        \item There exists increasing sequences $(\alpha_{n})_{n\in \mathbb{N}_{0}}$ and $(\beta_{m})_{m\in \mathbb{N}_{0}}$  of non-negative integers  satisfying \eqref{firstcondition} and for every $\tau>0$ there exists $C>0$ such that  for all $m,n\in\mathbb N_0$:
        $$\sup_{x\in\mathbb R}|x^{\alpha_n} \varphi(x)|\leq C M^{\tau,\sigma}_{\alpha_n}\quad \text{and}\quad \sup_{\xi\in\mathbb R}|\xi^{\beta_m} \widehat\varphi(\xi)|\leq CM^{\tau,\sigma}_{\beta_m}.$$
        \item There exists increasing sequences $(\alpha_{n})_{n\in \mathbb{N}_{0}}$ and $(\beta_{m})_{m\in \mathbb{N}_{0}}$  of non-negative integers satisfying \eqref{firstcondition} and for every $\tau>0$ there exists $C>0$ such that  for all $m,n\in\mathbb N_0$:
        $$\sup_{\xi \in\mathbb R}|\widehat\varphi^{(\alpha_n)} (\xi)|\leq C M^{\tau,\sigma}_{\alpha_n} \quad \text{and}\quad \sup_{\xi\in\mathbb R}|\xi^{\beta_m} \widehat\varphi(\xi)|\leq CM^{\tau,\sigma}_{\beta_m}.$$

        \item There exists increasing sequences $(\alpha_{n})_{n\in \mathbb{N}_{0}}$ and $(\beta_{m})_{m\in \mathbb{N}_{0}}$  of non-negative integers  satisfying \eqref{firstcondition} 
        such that for every  $\tau>0$ we have 
        $$\sup_{x\in \mathbb R}  |\varphi(x)| e^{T_{\tau,\sigma,(\alpha_n)}(|x|)}<\infty \quad \text{and}\quad  \sup_{\xi\in \mathbb R}  |\widehat\varphi(\xi)| e^{T_{\tau,\sigma,(\beta_n)}(|\xi|)}<\infty .  $$      
    \end{enumerate}
\end{theorem}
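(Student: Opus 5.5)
The proof mirrors that of Theorem~\ref{conditions gap}, with the Roumieu tools replaced by their Beurling counterparts: the Beurling version of Theorem~\ref{conditions} (cf. \cite[Theorem~6]{TTZ2025}) and the Beurling interpolation theorem stated after Theorem~\ref{12ekv}. The key point is that every parameter change in Section~\ref{sec:3} and in Lemma~\ref{15ekv} is explicit and monotone in $\tau$. From $\tau_0$ one obtains $\tau=2\tau_0L^\sigma$ in Theorem~\ref{thm: main} and in Theorem~\ref{12ekv}, and $\tau_1=\tau/(2L^\sigma)$ in Lemma~\ref{15ekv}. All other constants depend only on $\sigma$, $\tau_0$, $L$ and the interval length. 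Hence, given any target $\tau>0$, we choose $\tau_0=\tau/(2L^\sigma)$ in the hypotheses. This choice is admissible because the Beurling hypotheses hold for every $\tau_0$.

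For $1.\Rightarrow 2.$, $1.\Rightarrow 3.$ and $1.\Rightarrow 4.$ we take $\alpha_n=\beta_n=2^n$ for $n\geq1$ and $\alpha_0=\beta_0=0$, which satisfies \eqref{firstcondition} with $L=2$. We then restrict the full-order estimates from the Beurling version of Theorem~\ref{conditions} to these orders. For $2.\Rightarrow 1.$, fix $\tau>0$ and apply the argument of Theorem~\ref{12ekv} with $\tau_0=\tau/(2L^\sigma)$. The derivative estimates are handled on the intervals $[k,k+1]$ via Theorem~\ref{thm: main}, whose constants depend only on $b-a=1$. The moment estimates are obtained by the convex interpolation $|x^\alpha\varphi|\le|x^{\alpha_n}\varphi|^\theta|x^{\alpha_{n+1}}\varphi|^{1-\theta}$ together with \eqref{dndn+1} and \eqref{jede h}. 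Together these give both estimates of part 2 of the Beurling Theorem~\ref{conditions} with parameter $\tau$ and some $C=C(\tau)$. Since $\tau$ was arbitrary, we conclude $\varphi\in\mathcal S_{(\mathcal M_\sigma)}^{(\mathcal M_\sigma)}(\mathbb R)$. For $3.\Rightarrow 1.$ we interpolate $|x^\alpha\varphi|$ exactly as above. For $\widehat\varphi$ we interpolate $|\xi^{\beta}\widehat\varphi(\xi)|$ pointwise in the same way. This gives condition 3 of the Beurling Theorem~\ref{conditions} for every $\tau$. For $4.\Leftrightarrow 1.$ we use $\mathcal F\mathcal S_{(\mathcal M_\sigma)}^{(\mathcal M_\sigma)}=\mathcal S_{(\mathcal M_\sigma)}^{(\mathcal M_\sigma)}$, the Beurling analogue of \cite[Theorem~4]{TTZ2025}, together with $1.\Leftrightarrow 2.$ applied to $\widehat\varphi$.

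For $1.\Leftrightarrow 5.$ we use Lemma~\ref{15ekv}. If $\varphi$ satisfies condition 1, then by the Beurling Theorem~\ref{conditions} we have $|\varphi|e^{T_{\tau,\sigma}}$ bounded for every $\tau$. The first inequality $T_{\tau,\sigma,(d_n)}\le T_{\tau,\sigma}$ in \eqref{associated_gap} then gives condition 5, with the sequences chosen as above. Conversely, given $\tau>0$, apply condition 5 with $\tau_1=\tau/(2L^\sigma)$. The second inequality in \eqref{associated_gap} gives $e^{T_{\tau,\sigma}}\le e^{C_1}e^{T_{\tau_1,\sigma,(\alpha_n)}}$, and the same holds for $(\beta_n)$. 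Hence $|\varphi|e^{T_{\tau,\sigma}}$ and $|\widehat\varphi|e^{T_{\tau,\sigma}}$ are bounded for all $\tau$, which is condition 5 of the Beurling Theorem~\ref{conditions}.

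The main obstacle is bookkeeping rather than a new idea. We must verify that the constants $A$ from \eqref{jede h}, $h_1,h_2,h_3$, and $C_1$ do not depend on anything beyond $\sigma,\tau_0,L$ and the interval length. We must also check that the map $\tau_0\mapsto\tau$ is surjective onto $(0,\infty)$, so that the quantifier $(\forall\tau_0)$ transfers to $(\forall\tau)$. The hypothesis allows the constant $C_0$ to depend on $\tau_0$, and this is harmless because it enters the conclusion only multiplicatively.
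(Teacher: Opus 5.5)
Your proposal is correct and follows essentially the paper's route. The paper omits this proof; it is meant to mirror Theorem~\ref{conditions gap} using the Beurling version of Theorem~\ref{conditions}, the Beurling counterpart of Theorem~\ref{12ekv}, Fourier invariance and Lemma~\ref{15ekv}, and your choice $\tau_0=\tau/(2L^\sigma)$ (resp.\ $\tau_1=\tau/(2L^\sigma)$) is exactly the bookkeeping needed to pass from $(\forall\tau_0)$ to $(\forall\tau)$.

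One harmless loose end, which the paper shares: since $\alpha_0=\beta_0=0$, the gap condition fails at $n=0$, so the finitely many orders below $\alpha_1$ and $\beta_1$ need separate treatment. For example, apply Theorem~\ref{thm: main} to $(\beta_m)_{m\ge1}$, use Cartan--Gorny between orders $0$ and $\beta_1$ on unit intervals, and use trivial interpolation for the moments.
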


\section*{Appendix}

\begin{proof}[\textbf{Proof of Lemma \ref{max1}}]
	We define $\psi (x):=\ln \phi (x),$ $x\geq 1$. Then, 
    \begin{equation*}
		\psi(x)=\ln \bigg(\dfrac{D}{x}\bigg)^{x}
        =x(\ln D-\ln x),\quad x\geq 1.
	\end{equation*}
	Since the natural logarithm is strictly increasing, functions $\phi$  and $\psi$ attain their extrema at the same points.
    We obtain \begin{equation*}
		\psi '(x)=\ln D-\ln x-1,\quad x\geq 1.
	\end{equation*}
		Hence, $\psi '(x)=0$ if and only if $\ln x=\ln D-1,$ which yields the unique critical point $x=\dfrac{D}{e}.$  Since $D>e,$ we have $\dfrac{D}{e}<1.$
		Moreover, the second derivative satisfies 
        \begin{equation*}
			\psi ''(x)=-\dfrac{1}{x}<0, \quad x\geq 1.
		\end{equation*}
		Therefore, $\psi$ is strictly concave on 
        $[1,\infty)$ and
        its unique critical point coincides with  the point where  $\psi$ attains its maximum, namely $x=\dfrac{D}{e}$.
	\end{proof}

	\begin{proof}[\textbf{Proof of Lemma \ref{max2}}]
		Consider the function \begin{equation*}
			\varphi (x):=\dfrac{x\ln x}{x^{\sigma}},\quad x\geq 1.
		\end{equation*}
		The first derivative of the function $\varphi$ is given by \begin{equation*}
			\varphi '(x)=(x^{1-\sigma}\ln x)'=(1-(\sigma -1)\ln x)x^{-\sigma},\quad x\geq 1.
		\end{equation*}
		Therefore, $\varphi '(x)=0$ if and only if $1-(\sigma -1)\ln x=0$ which yields the unique critical point $x=e^{\frac{1}{\sigma -1}}.$ Differentiating once more, we obtain \begin{eqnarray*}
			\varphi ''(x)=((1-(\sigma -1)\ln x)x^{-\sigma})'=\dfrac{\sigma(\sigma -1)\ln x-(2\sigma -1)}{x^{\sigma +1}},\quad x\geq 1
        .
		\end{eqnarray*}
		Since $\sigma >1$, the second derivative at the critical point is 
        \begin{eqnarray*}
			\varphi ''(e^{\frac{1}{\sigma -1}})=\dfrac{\sigma (\sigma -1)\ln e^{\frac{1}{\sigma -1}}-(2\sigma -1)}{(e^{\frac{1}{\sigma -1}})^{\sigma +1}}=\dfrac{1-\sigma}{e^{\frac{\sigma +1}{\sigma -1}}}<0.
		\end{eqnarray*}
		It follows that $\varphi$ attains its maximum at $x=e^{\frac{1}{\sigma -1}}$. Thus, \begin{eqnarray*}
			\dfrac{x\ln x}{x^{\sigma}} =\dfrac{\ln x}{x^{\sigma -1}} \leq \dfrac{1}{e(\sigma -1)},\quad x\geq 1,
		\end{eqnarray*}
		that is, $x\ln x \leq \dfrac{1}{e(\sigma -1)}x^{\sigma} =Rx^{\sigma},$ $x\geq 1$.
	\end{proof}

\subsection*{Declaration of competing interest}
The authors have no competing interests to declare.


\subsection*{Acknowledgment}
The authors thank N. Teofanov for helpful discussions.

\noindent
The second and third authors were supported by the Science Fund of the Republic of Serbia, Grant No. 2727, {\it Global and Local Analysis of Operators and Distributions} – GOALS, and the third author by the Ministry of Science, Technological Development and Innovation of the Republic of Serbia,
Grants No. 451-03-33/2026-03/200125 and 451-03-34/2026-03/200125.


\end{document}